\newtheorem{lemma}{Lemma}[section]
\newtheorem{theorem}[lemma]{Theorem}
\newtheorem*{theorem*}{Theorem}
\newtheorem{proposition}[lemma]{Proposition}
\theoremstyle{definition}
\newtheorem{definition}[lemma]{Definition}
\newtheorem{remark}[lemma]{Remark}
\newtheorem{question}[lemma]{Question}
\newcommand{\R}{\mathbb{R}}
\newcommand{\Z}{\mathbb{Z}}
\newcommand{\N}{\mathbb{N}}
\newcommand{\M}{\operatorname{M}}
\newcommand{\GL}{\operatorname{GL}}
\newcommand{\dFl}{\operatorname{dFl}}
\newcommand{\tFl}{\operatorname{tFl}}
\newcommand{\torsion}{\operatorname{Torsion}}
\title{Computing Homotopy Types of Directed Flag Complexes}
\author{Dejan Govc}
\address{Institute of Mathematics, University of Aberdeen, Aberdeen,
United Kingdom AB24 3UE}
\email{dejan.govc@abdn.ac.uk}
\thanks{The author was supported by EPSRC grant EP/P025072/1}
\begin{document}

\begin{abstract}
Combinatorially and stochastically defined simplicial complexes often have the homotopy type of a wedge of spheres. A prominent conjecture of Kahle quantifies this precisely for the case of random flag complexes. We explore whether such properties might extend to graphs arising from nature. We consider the brain network (as reconstructed by Varshney \& al.) of the {\em Caenorhabditis elegans} nematode, an important model organism in biology. Using an iterative computational procedure based on elementary methods of algebraic topology, namely homology, simplicial collapses and coning operations, we show that its directed flag complex is homotopy equivalent to a wedge of spheres, completely determining, for the first time, the homotopy type of a flag complex corresponding to a brain network.

We also consider the corresponding flag tournaplex and show that torsion can be found in the homology of its local directionality filtration. As a toy example, directed flag complexes of tournaments from McKay's collection are classified up to homotopy. Moore spaces other than spheres occur in this classification. As a tool, we prove that the fundamental group of the directed flag complex of any tournament is free by considering its cell structure.
\end{abstract}

\maketitle

\thispagestyle{empty}

It has been observed that many simplicial complexes naturally occurring in combinatorial and stochastic topology have the homotopy type of a wedge of spheres (see e.g. \cite{forman, kahle, kukiela}). The reasons behind this are still far from clear. A partial explanation perhaps lies in the fact that many combinatorial methods such as shelling \cite{bjorner} and discrete Morse theory \cite{forman} are well suited to recognise this kind of homotopy type. On the other hand, structural reasons also seem to play a role. In \cite{kahle}, flag complexes (clique complexes) of Erd\H{o}s-R\'{e}nyi random graphs \cite{erdos-renyi} are studied and evidence is given towards the conjecture that, in a certain natural regime, these are almost always wedges of spheres; it is then pointed out that many simplicial complexes in combinatorics can be understood as order complexes of posets and are therefore examples of flag complexes.

A version of the flag complex construction for directed graphs called the {\em directed flag complex} has recently emerged in applications of topological methods to neuroscience \cite{frontiers}. Such complexes are examples of {\em ordered simplicial complexes} \cite{frontiers}. These are a natural generalisation of the usual concept of simplicial complex and are characterised by the property that the simplices are determined by their ordered lists of vertices, rather than their sets of vertices. {\em Flag tournaplexes} are a related, even more recent construction \cite{tournaplexes}.

When studying data in the form of directed graphs, such complexes allow bringing in a whole arsenal of topological invariants, such as homology and persistent homology \cite{oudot, edelsbrunner-harer, topforcomp}, with the aim of obtaining insight into the global structure of the data. This has been used with some success to study structure and function in the case of the Blue Brain Project reconstructions of neocortical columns of a rat \cite{frontiers}. Such computations were taken even further when \textsc{Flagser} \cite{flagser} was developed, a software package based on \textsc{Ripser} \cite{ripser} but specialised for working with directed flag complexes. A variation called \textsc{Tournser} which works for tournaplexes \cite{tournaplexes} was also presented in \cite{flagser}, as well as \textsc{Deltser} for $\Delta$-complexes \cite[Section 2.1]{hatcher}.

A much smaller example than the neocortical column of a rat is the network of chemical synapses of the {\em Caenorhabditis Elegans} nematode (commonly referred to as {\em C. Elegans}), an important model organism in biology. The interest of this network lies in the fact that it is still small enough that it can be mapped out almost completely on the biological level, as has been done in \cite{celegans}. The resulting directed graph will be referred to as the {\em C. Elegans graph} for the purposes of this paper. This graph has also been investigated topologically in \cite{frontiers}, where its directed flag complex and homology with $\Z_2$ coefficients were computed. The flag tournaplex has also been computed and the corresponding directionality distribution has been compared with the one for the rat neocortical column \cite{tournaplexes}. Remarkably, the two directionality distributions appear very similar, whereas they can be strikingly different for other graphs. 

Naturally occurring networks tend to behave quite differently from Erd\H{o}s-R\'{e}nyi random graphs (see \cite{barabasi}), and this seems to transfer to the topological level as well. It is known that the Betti numbers of flag complexes of Erd\H{o}s-R\'{e}nyi random graphs tend to concentrate around a single homological degree \cite{kahle, kahleannals}. Recently, in an unpublished note \cite{lasalle}, the results of \cite{kahle} have been extended to directed flag complexes of directed Erd\H{o}s-R\'{e}nyi random graphs as well. In sharp contrast, the directed flag complex of the C. Elegans graph was found in \cite{frontiers} to have much richer topology. It is $7$-dimensional and has nontrivial homology in every degree (see Table \ref{fig:cebetti}). This is quite far from being concentrated around a single degree, so the homotopy type is far from being determined by the homology alone. A topologist might wonder how rich exactly the topology occurring in such natural systems could be. It is the aim of this paper to initiate an exploration of this question by focusing on the specific example of C. Elegans and using any means necessary to determine the \mbox{homotopy type completely.} 

The computations over $\Z_2$ in \cite{frontiers} do not give any information regarding torsion, so the first step is to compute integral homology. This computation reveals that there is no torsion and in particular the rational Betti numbers agree with the $\Z_2$ Betti numbers. To proceed beyond this observation, the overall strategy is to attempt simplifying the complex by splitting off as many spherical wedge summands as possible, up to homotopy. This is a well-established technique in algebraic topology, but as the complex is rather big, it needs to be done a systematic computational manner. To achieve this, an iterative procedure combining simplicial collapses to keep the complex as reduced as possible, homology computations to find candidate spherical cycles and coning operations to split them off is developed and implemented in \textsc{Mathematica}. Applied appropriately, the procedure is sufficient to simplify the complex to a single vertex, thus leading to the main result of this paper (see Theorem \ref{main}):

\begin{theorem*}
The directed flag complex of the C. Elegans graph $G$ is homotopy equivalent to a wedge of spheres.
\end{theorem*}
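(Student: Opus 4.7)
The plan is to prove this by explicit computational simplification of the directed flag complex $X = \dFl(G)$, combining three classical tools of algebraic topology: integral homology computation, simplicial collapses, and coning off spherical cycles.

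First I would compute the integral homology $H_*(X;\Z)$ by adapting a tool like \textsc{Flagser}. Since a wedge of spheres has free homology, any torsion would immediately disprove the claim; so the initial checkpoint is torsion-freeness of $H_*(X;\Z)$ in every degree, which the introduction already signals to hold. A further preliminary is to understand $\pi_1(X)$: if a wedge of spheres is to be the answer, its one-dimensional piece must account for $\pi_1(X)$, so I want $\pi_1(X)$ to be free of rank $\beta_1(X)$.

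Next comes the main iterative procedure. At each stage I maintain a CW complex $Y$ (initially $Y = X$) together with a wedge of spheres $W$ already split off (initially $W = *$), satisfying $X \simeq Y \vee W$. I alternate three phases: (i) a \emph{collapse phase}, performing elementary simplicial collapses on $Y$ to shrink it without changing its homotopy type; (ii) a \emph{detection phase}, recomputing $H_*(Y;\Z)$ and searching for a subcomplex $Z \subseteq Y$ combinatorially modelling a sphere $S^n$ and representing a generator of $H_n(Y)$; (iii) a \emph{coning phase}, attaching the cone $CZ$ to $Y$ along $Z$, arranged so that the homotopy type splits as $Y \simeq Y' \vee S^{n+1}$ for a simpler $Y'$ obtainable from $Y \cup CZ$, with the resulting $S^{n+1}$ transferred into $W$. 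The procedure terminates once $Y$ collapses to a point, yielding $X \simeq W$.

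The main obstacle I foresee is not the existence of such a scheme in principle but carrying it out on a complex of this size. The C.\ Elegans directed flag complex is $7$-dimensional and has nontrivial homology in every degree, so no naive collapsing can exhaust it, and the right spherical cycles to cone off must be searched for systematically. A secondary subtlety is that a coning operation only yields a genuine wedge decomposition under favourable conditions on how $Z$ sits inside $Y$ (for instance, the inclusion $Z \hookrightarrow Y$ becoming null-homotopic after the coning, or $Z$ bounding a combinatorial disk $D$ in $Y$ so that $D \cup CZ$ forms a cleanly separable $S^{n+1}$). Verifying these hypotheses at each step, and automating the whole loop in \textsc{Mathematica} so that it actually terminates on the full complex, is where the real work will go.
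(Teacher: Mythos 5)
Your overall strategy --- compute integral homology to rule out torsion, then iterate simplicial collapses with coning operations that split off spherical wedge summands until only a point remains --- is exactly the strategy of the paper (its procedure \textsf{cone-and-collapse}, justified by Lemma \ref{genwedge}). However, the key step, namely the mechanism by which coning off a spherical subcomplex yields a wedge splitting, is misstated in a way that would make the iteration fail. You ask for $Z\simeq S^n$ representing a generator of $H_n(Y)$, but then propose as the ``favourable condition'' that the inclusion $Z\hookrightarrow Y$ be null-homotopic, or that $Z$ bound a combinatorial disk $D$ in $Y$. Both of these force $[Z]=0$ in $H_n(Y)$, contradicting your own requirement; and under these hypotheses the mapping cone satisfies $Y\cup CZ\simeq Y\vee S^{n+1}$, i.e.\ coning \emph{enlarges} the complex rather than simplifying it. The splitting you want also goes in the wrong direction and the wrong dimension: what is needed is $Y\simeq (Y\cup CZ)\vee S^{n}$, and the summand must be $S^n$, not $S^{n+1}$, since it has to account for the class being killed in $H_n$.

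The correct hypothesis (Lemma \ref{genwedge} of the paper) is of a different nature: $Z$ must contain cells $e_1,\ldots,e_k$ that are \emph{maximal in $Y$} (not merely in $Z$) such that $Z-\{e_1,\ldots,e_k\}$ is contractible. Then $Y=Z\cup(Y-\{e_1,\ldots,e_k\})$ is a union of two subcomplexes with contractible intersection, whence $Y\simeq Z\vee(Y-\{e_1,\ldots,e_k\})\simeq\bigvee_i S^{d_i}\vee(Y\cup CZ)$ with $d_i=\dim e_i$; for $Z$ the support of a top-dimensional cycle one takes $k=1$ and $d_1=n$. This condition is checkable combinatorially (remove a top cell of $Z$ lying in no other maximal simplex of $Y$ and test collapsibility of what is left), which is what makes the loop automatable; your proposed conditions are not only inconsistent with $[Z]\neq 0$ but also harder to verify. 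Once this is repaired, the rest of your outline --- including the observation that termination depends delicately on the choices made at each step, and that no $\pi_1$ argument is ultimately needed because the procedure reduces the complex all the way to a vertex --- matches the paper's proof, which records the explicit sequence of collapses and conings as a machine-checkable certificate.
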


This appears to be the first time in the literature the homotopy type of the directed flag complex of any brain network has been completely determined. The proof consists of applying the above procedure and recording the resulting sequence of collapses and coning operations (which constitute an explicit recipe to construct the required homotopy equivalence); this relies entirely on basic homotopy theoretic principles, and hence requires the computer only for speed. The corresponding flag tournaplex is also briefly examined and torsion is found in the integral homology of certain stages of its local directionality filtration \cite{tournaplexes}, however the exact homotopy type is not analysed in this case (see Theorem \ref{torsion}):

\begin{theorem*}
Let $X^d$ be the $d$-th filtration stage of the flag tournaplex of the C. Elegans graph $G$, with respect to the local directionality filtration. For $d\in[2,10)$, the integral homology of $X^d$ contains torsion in degree $1$:
\[
\torsion(H_1(X^d))\cong\Z_3.
\]
For $d\in[20,28)$, the integral homology of $X^d$ contains torsion in degree $2$:
\[
\torsion(H_2(X^d))\cong\Z_3.
\]
In particular, for these values, $X^d$ is not a wedge of spheres.
\end{theorem*}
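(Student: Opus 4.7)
The plan is to reduce the statement to a finite computation. The claim that $X^d$ is not a wedge of spheres is immediate from the torsion assertion, since the integral homology of any wedge of spheres is torsion-free; hence the real content is to verify that $\torsion(H_1(X^d))\cong\Z_3$ in the first range of $d$ and $\torsion(H_2(X^d))\cong\Z_3$ in the second range. I would therefore set up the full flag tournaplex of the C.~Elegans graph $G$ together with its local directionality filtration, as defined in \cite{tournaplexes}, and then compute integral homology of the finitely many relevant filtration stages.

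Concretely, the first step is to enumerate the simplices of the flag tournaplex of $G$ using \textsc{Tournser} (or a custom enumeration), and for each simplex record its local directionality value. This yields an explicit list of cells for each $X^d$. Next, for each $d$ in the two ranges $[2,10)$ and $[20,28)$ I would assemble the integer boundary matrices of $X^d$ up to one degree above the degree of interest, and compute their Smith normal forms. The torsion of $H_k(X^d)$ is then read off directly from the invariant factors. To certify that the torsion is of order exactly $3$ and to cross-check the computation, I would also compute Betti numbers with $\Q$, $\Z_2$, $\Z_3$ and $\Z_5$ coefficients; the universal coefficient theorem then gives a sharp test: $\Z_3$-torsion in degree $k$ is present precisely when the mod-$3$ Betti number in degree $k$ (or $k+1$) exceeds the rational one, and is absent over $\Z_2$ and $\Z_5$.

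The main obstacle is the size of the chain complexes: for the relevant values of $d$ the tournaplex has many simplices in low dimensions, so direct Smith normal form computation may be prohibitive. I would address this by first performing simplicial (or acyclic-matching) collapses to reduce $X^d$ to a much smaller homotopy equivalent complex before invoking Smith normal form, in the same spirit as the collapse-based reductions used in the proof of Theorem~\ref{main}. Localising the chain complex around the relevant degree (discarding cells that cannot affect $H_k$) and exploiting the block structure coming from the filtration would further shrink the problem.

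Finally, having verified the torsion subgroups for the endpoints of the two intervals, I would observe that the integral homology is constant on each indicated range: no simplex is added whose directionality value lies strictly between the listed bounds, so the filtration is constant on $[2,10)$ and on $[20,28)$, and a single computation at each representative value of $d$ suffices. The conclusion that $X^d$ is not a wedge of spheres then follows directly from the torsion-freeness of the homology of such wedges.
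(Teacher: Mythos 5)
Your proposal is correct and takes essentially the same route as the paper: Theorem \ref{torsion} is proved there simply by computing the integral homology of the relevant filtration stages by machine (via Smith normal form, in \textsc{Mathematica}), with the non-wedge-of-spheres conclusion following from torsion-freeness of the homology of wedges of spheres. Your additional remarks on cross-checking with field coefficients and on the filtration being constant on the stated intervals are sensible refinements but do not change the nature of the argument.
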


This appears to be the first time torsion has been found in a flag complex arising from a biological neuronal network. Finally, some of the procedures presented here were originally developed to answer certain questions arising from the study of tournaplexes \cite{tournaplexes}, one of these being how to distinguish nonisomorphic regular tournaments from one another. One possible approach is to examine the simplex counts, the Betti numbers and the homotopy types of their directed flag complexes. Such considerations eventually led to a complete classification of the tournaments, regular tournaments and doubly regular tournaments from the collection \cite{mckay} up to homotopy (see Section \ref{sec:tournaments}). Most of these are found to be wedges of spheres, but in a few cases some other Moore spaces appear. As a tool used to classify homotopy types of tournaments, properties of the cell structure of their directed flag complexes are used to prove the following result, which could be of independent interest (see Theorem \ref{pi1free}):

\begin{theorem*}
Suppose $T$ is a tournament and let $X=\dFl(T)$. Then $\pi_1(X)$ is a free group.
\end{theorem*}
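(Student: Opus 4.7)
Since $\pi_1$ only sees the $2$-skeleton, I would work throughout with $Y := X^{(2)}$, whose $1$-skeleton is a complete graph $K_n$ on $n = |V(T)|$ vertices (with edge directions forgotten) and whose $2$-cells are in bijection with the transitive triangles of $T$. Fix a basepoint $v_1 \in V(T)$ and take the star-shaped spanning tree of $K_n$ rooted at $v_1$. The non-tree edges then give a free generating set $\{g_{\{i,j\}}\}_{i,j \neq 1}$ for $\pi_1(K_n)$, where $g_{\{i,j\}}$ is the triangular loop $v_1 - v_i - v_j - v_1$, so $\pi_1(\dFl(T))$ is presented by these generators subject to one relation per $2$-cell of $Y$.

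A direct computation shows that every transitive triangle of $T$ containing $v_1$ contributes exactly the relation $g_{\{i,j\}} = 1$ for the unique non-tree edge $\{v_i, v_j\}$ it contains. Partition $V \setminus \{v_1\} = A \sqcup B$ into the in- and out-neighbourhoods of $v_1$. A short check shows that $\{v_1, v_i, v_j\}$ is a (non-transitive) $3$-cycle precisely when $v_i$ and $v_j$ lie on opposite sides of the partition and the edge between them is oriented from the $B$-side to the $A$-side. Hence, after processing the relations from all triangles through $v_1$, the \emph{surviving} generators correspond exactly to the backward cross-edges of the bipartite graph $H$ on $A \sqcup B$.

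The remaining $2$-cells come from transitive triangles $\sigma = \{v_i, v_j, v_k\}$ in $T - v_1$. The heart of the proof is the observation that at most two of the three edges of such a $\sigma$ can be surviving generators, since any intra-$A$ or intra-$B$ edge is killed automatically. After substituting $g = 1$ for all killed generators, a case analysis by the distribution of $\{v_i, v_j, v_k\}$ over $A \cup B$, and by the orientations of the cross-edges (transitivity of $\sigma$ rules out certain orientations as they would force $\sigma$ to be a $3$-cycle), shows that the relation contributed by $\sigma$ is always one of: the trivial relation, a killing $g_e = 1$, or an identification $g_e = g_{e'}^{\pm 1}$. Crucially, it is never a nontrivial product of two distinct surviving generators.

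A presentation whose relations are purely of killing and identification type manifestly defines a free group on the equivalence classes of generators (under the equivalence induced by the identifications) that are not set to $1$. This gives the conclusion. The main obstacle is not conceptual but combinatorial: one has to track orientations carefully when converting each triangle boundary into a word in the $g_{\{i,j\}}$, and then run the case analysis of the previous paragraph systematically enough to confirm that the configuration ``all three edges of $\sigma$ are surviving backward cross-edges'' can never arise in a transitive $\sigma$.
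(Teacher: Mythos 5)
Your reduction is essentially the one in the paper: fix $v_1$, split $V\setminus\{v_1\}$ into the in- and out-neighbourhoods of $v_1$, observe that the triangles through $v_1$ kill every generator except those coming from the backward cross-edges, and then analyse the triangles avoiding $v_1$. (The paper packages your spanning-tree step and your first round of killings into a single geometric move --- collapsing the contractible cone with apex $v_1$ over all edges other than the backward cross-edges --- but the resulting presentation is the same.) Your identification of the surviving generators and the bipartite observation that a triangle off $v_1$ has at most two cross-edges, hence at most two surviving generators in its boundary word, are both correct.

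The gap is in your final step. A presentation all of whose relators are killings $g=1$ and identifications $g=g'^{\pm1}$ does \emph{not} ``manifestly define a free group on the equivalence classes'': $\langle a,b\mid ab^{-1},\ ab\rangle\cong\Z/2$. If the signs of the identifications are incoherent around a cycle of identifications, eliminating generators produces a relator $g^{2}=1$, and freeness fails. So it is not enough to know that each surviving relator involves at most two distinct generators, each appearing once; you must also show the identifications are sign-coherent. That is precisely what the orientation bookkeeping you defer as ``not conceptual but combinatorial'' has to deliver, and it is the actual content of the paper's case analysis: orienting every cross-edge from the in-neighbourhood side to the out-neighbourhood side, the two surviving edges of a transitive triangle avoiding $v_1$ share an endpoint and are traversed with opposite exponents in the boundary word, so every surviving relator has the form $gg'^{-1}$ with $g\neq g'$, i.e.\ an unsigned identification $g=g'$. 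Only then is the group free on the equivalence classes not meeting a killed generator. As written, your closing principle would equally ``prove'' that the $\Z/2$ example above is free, so the missing check is where the theorem actually lives rather than a formality.
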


The structure of the paper is as follows. Section \ref{sec:background} introduces the background definitions. Section \ref{sec:methods} explains the methods used to obtain the desired homotopy equivalences. Section \ref{sec:tournaments} is devoted to a classification of the homotopy types for the tournaments found in the collection \cite{mckay}, as well as a brief discussion of tournaplexes at the end. At the beginning, a proof of the Theorem \ref{pi1free} is also given, which serves as a useful tool in the classification. Section \ref{sec:celegans} is devoted to the main object of study, i.e. the complexes associated to the C. Elegans graph. In Appendix \ref{sec:algorithms}, an explicit description of the algorithms used is given in pseudocode, as well as some comments regarding the implementation and details of the orderings used to make the computation work.

These algorithms are not claimed to be optimal in any way. The main focus is rather to find any means possible to obtain the main result (Theorem \ref{main}), with emphasis on giving the complete description necessary for the reader to be able verify the result independently. In addition to implementing the algorithm that verifies Theorem \ref{main} in \textsc{Mathematica} (the final computation is available online, see \cite{github}, \texttt{C. Elegans.nb}), a minor modification has also been implemented which records the precise sequence of collapses and coning operations used in the process into a file. The resulting sequence of operations is available online (see \cite{github}, \texttt{sequence.dat}), so the result could in principle be verified independently by checking that each operation recorded in this output is a valid operation, without the need to implement any of the procedures described.

\section{Background}
\label{sec:background}

We start with the basic definitions. Ordered simplicial complexes can be described purely combinatorially (abstractly) in a way completely analogous to the usual definition of abstract simplicial complexes:

\begin{definition}[\cite{frontiers}]
An {\em abstract ordered simplicial complex} $X$ is a collection of nonempty finite ordered sets\footnote{An {\em ordered set} is an ordered tuple whose entries are all distinct.} that is closed under taking nonempty ordered subsets. An ordered set $\sigma$ in $X$ with $n+1$ elements is called an {\em $n$-simplex} of $X$. An element of $\sigma$ is called a {\em vertex} of $\sigma$.
\end{definition}

However, this definition also has a topological counterpart, as any such complex has a {\em geometric realisation}, which is a topological space (in fact, a CW complex) associated to it in a natural way. To define it, one can notice any such complex is naturally a semisimplicial complex (a.k.a. a $\Delta$-set) by defining the $i$-th boundary of the $n$-simplex $(v_0,\ldots,v_n)$ to be tuple obtained by removing the vertex $v_i$. A nice explanation of how to obtain a geometric realisation of a semisimplicial complex, as well as various related notions, is given in \cite{friedman}, see Example 4.5.

For the purpose at hand, the most important example of such ordered simplicial complexes is the following:

\begin{definition}[\cite{frontiers}]
Let $G$ be a directed graph\footnote{Unless stated otherwise, all graphs are assumed to be directed. Reciprocal pairs of edges are allowed, but multiple edges and loops (edges from a vertex to itself) are not.}. A {\em directed $(n+1)$-clique} in $G$ is an ordered set of vertices $(v_0,\ldots,v_n)$ such that there is an edge $v_i\to v_j$ in $G$ whenever $0\leq i<j\leq n$. The {\em directed flag complex} of $G$, denoted $\dFl(G)$ is the collection of all directed cliques in $G$.
\end{definition}

To see how this compares with ordinary flag complexes, note that the directed flag complex of a complete directed graph (i.e. graph that has a pair of reciprocal connections between any pair of vertices) on $n$ vertices contains $n!$ different $(n-1)$-simplices ($n$-cliques) which have the same underlying set of vertices. The simplices themselves correspond to the permutations of this vertex set. The complex in this particular example is known as {\em the complex of injective words on $n$ letters} and has been investigated in the literature before \cite{farmer,bjorner-wachs}. Recently, their generalisations, which provide further examples of ordered simplicial complexes, have been studied as well \cite{injective}.

However, ordered simplicial complexes can themselves be generalised. To explain how, we first recall the classical notion of tournaments. The {\em out-degree} of a vertex $v$ in a directed graph is the number of vertices $u$ such that there is an outgoing edge $v\to u$ (and analogously for the {\em in-degree}). The {\em out-degree} of a pair of vertices $\{v,w\}$ is the number of vertices $u$ such that there are outgoing vertices from both $v$ and $w$, i.e. $v\to u$ and $w\to u$ (see \cite{doubly}).

\begin{definition}
A {\em tournament} is a directed graph which has precisely one directed edge between any pair of vertices. A tournament is called {\em transitive} if it contains no directed cycles. It is called {\em regular} if the in-degree of every vertex is equal to the out-degree (equivalently, if the out-degree of every vertex is the same). It is called {\em doubly regular} (see \cite{doubly}) if it is regular and the out-degree of every pair of vertices is the same.
\end{definition}

The name comes from the fact that such graphs can be used to model a tournament where between every pair of players one player dominates the other, indicated by a directed edge between them. A classical reference on tournaments is \cite{moon}. An example of a regular tournament is the well-known game of paper-rock-scissors. Regular tournaments always have an odd number of vertices and examples for any odd number can easily be constructed. Doubly regular tournaments are somewhat trickier. They always contain $4k+3$ vertices where $k\in\N_0$. The question whether such a tournament exists for each $k$ appears to be still open. In particular, a positive answer would imply the Hadamard conjecture \cite{doubly, hadamard}.

Tournaments are directed graphs, so one can study them by looking at their directed flag complexes. In fact, this seems historically to be the one of the first special cases of directed flag complexes that has been studied in the literature \cite{burzio-demaria, demaria-kiihl, deniz}. Note that directed cliques in a tournament are precisely transitive subtournaments. Since there are no reciprocal edges, a transitive subtournament is uniquely determined by its vertex set. Therefore the directed flag complex of a tournament is a genuine (unordered) simplicial complex. As such, the directed flag complex construction in this special case does not yet exhibit the full complexity of the general construction described in \cite{frontiers}.

However, studying a tournament by associating a topological space to it is not the only thing that can be done with tournaments. Tournaments can also be viewed as building blocks of a more general kind of complex. The idea is that if directed flag complexes are built out of transitive subtournaments of the initial graph, why not build a complex that consists of all possible tournaments? This line of thinking leads to the following definition:

\begin{definition}
Let $G$ be a directed graph. The {\em flag tournaplex} of $G$, denoted $\tFl(G)$ is the collection of all tournaments contained in $G$ as subgraphs.
\end{definition}

Like directed flag complexes are examples of ordered simplicial complexes, flag tournaplexes are examples of tournaplexes \cite{tournaplexes}. In addition to the abstract definition as collections of tournaments closed with respect to taking faces (subtournaments), these too have a geometric realisation, as they are an example of semisimplicial complexes as well. An interesting feature is that various filtrations can be defined on them, arising purely from their structure. This allows them to be studied using methods such as persistent homology \cite{survey}, etc. For more details, the reader is directed to \cite{tournaplexes}.

Importantly, as all of these complexes are semisimplicial complexes, they can be studied using simplicial homology. This is useful, as it allows us to attempt understanding their homotopy type (i.e. the homotopy type of the corresponding geometric realisation) using purely computational tools. Homology is assumed to have integer coefficients throughout the paper, which is occasionally emphasised by referring to it as ``integral homology''. Reduced homology is used where convenient. The torsion subgroup (consisting of all elements of finite order) of a given abelian group $A$ is denoted by $\torsion(A)$. For an introduction to homology and other basic notions of algebraic topology, the reader is referred to one of the standard textbooks such as \cite{munkres}, \cite{hatcher} or \cite{bredon}. As a few of these basic notions play quite a prominent role in the paper, we also briefly recall them here.

For the purposes of algebraic topology, topological spaces are often too general, so one often imposes some technical conditions on the spaces under consideration. Whenever convenient, it will be assumed that the spaces under consideration are CW complexes, as on the one hand these are genuine topological spaces, and on the other hand they can be understood to subsume (ordered) simplicial complexes and other semisimplicial complexes as a special case via geometric realisation. In most cases, the reader who feels uncomfortable with the general notion of CW complex can substitute ``(ordered) simplicial complex'' wherever the concept occurs.

\begin{definition}
Let $X$ be a CW complex. The {\em cone} $CX$ over $X$ is the quotient space obtained from $X\times[0,1]$ by identifying the points of $X\times\{1\}$ into a single point; for a brief discussion of how to interpret this in the simplicial case, see the following paragraph. The {\em suspension} $SX$ of $X$ is the quotient space obtained from $X\times[-1,1]$ by identifying the points of $X\times\{1\}$ into a single point and identifying the points of $X\times\{-1\}$ into a different point (see Figure \ref{fig:cones}). The space $X$ is understood as a subspace of $CX$ and $SX$ by identifying it with $X\times\{0\}$. Given a subcomplex $A$ of $X$, one can form the space $X\cup CA$ in this way\footnote{This is a particular case of the {\em mapping cone} construction, see \cite[Example 0.13]{hatcher}.}. This operation is referred to as {\em coning off} the subspace $A$ of the space $X$.
\end{definition}

\begin{figure}[htb]
\centering
\includegraphics[height=75pt]{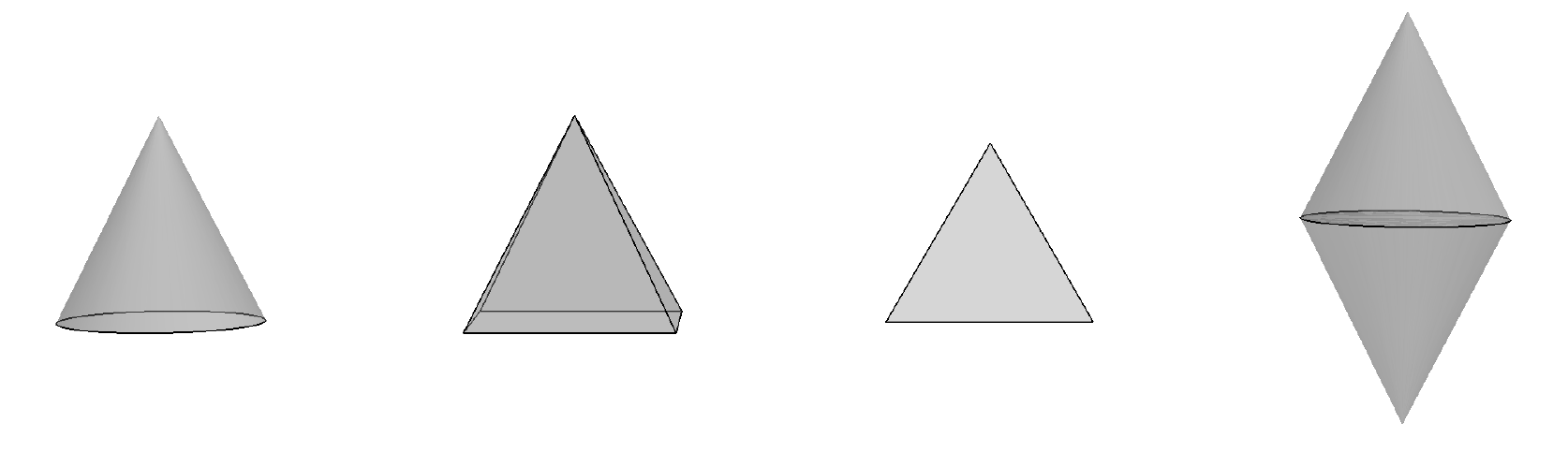}
\caption{Cone over a circle, cone over the boundary of a square, cone over an interval, suspension of a circle.}
\label{fig:cones}
\end{figure}

A suspension can also be viewed as obtained by gluing together two copies of the cone over $X$. The cone $CX$ is always contractible and $X\cap CA$ has the homotopy type of $X/A$. If $X$ is an (ordered) simplicial complex, $CX$ can also be realised as an (ordered) simplicial complex. To realise it, choose a vertex $v$ not contained in $X$, then $CX$ consists\footnote{This is a special case of the simplicial join construction which we express symbolically by writing $CX=X\ast\{v\}$ (see e.g. \cite[Chapter 8, \S 62]{munkres} or \cite[Section 2.3.4]{jonsson}).} of all possible (ordered) simplices of the form $(v_0,\ldots,v_n,v)$, where $(v_0,\ldots,v_n)$ is a simplex of $X$. Similarly, to realise $X\cup CA$, take $X$ and then add to it all possible (ordered) simplices of the form $(v_0,\ldots,v_n,v)$, where $(v_0,\ldots,v_n)$ is a simplex of $A$. Another important construction is the following.

\begin{definition}
Let $X$ and $Y$ be spaces with chosen basepoints. Then the {\em wedge sum} of $X$ and $Y$ is the space $X\vee Y$ obtained by taking the disjoint union of $X$ and $Y$ and identifying the two basepoints.\footnote{Alternatively, one could define it as the subspace of the product $X\times Y$ given by $\{x\}\times Y\cup X\times\{y\}$, where $x$ and $y$ are the chosen basepoints.} Given a finite collection $(X_i)_{i=1}^n$ of spaces with chosen basepoints, the {\em wedge sum} $\bigvee_{i=1}^n X_i$ is the space obtained by taking the disjoint union of the $X_i$ and identifying the $n$ basepoints.
\end{definition}

In most cases under consideration, the spaces will be path connected CW complexes, in which case the choice of basepoint does not matter. For this reason, basepoints will not be mentioned in the rest of the paper, but the reader should be aware that some care needs to be taken when working with disconnected complexes. Note that $\bigvee_{i=1}^n X_i$ could also be obtained (up to homeomorphism) by iterating the wedge sum operation as defined for pairs of spaces.

Wedge sum is an important operation in algebraic topology, and as already mentioned many naturally occurring spaces are homotopy equivalent to wedges of spheres. Furthermore, homology is additive with respect to the wedge sum operation in the sense that $\widetilde{H}_*(X\vee Y)=\widetilde{H}_*(X)\oplus\widetilde{H}_*(Y)$. Using this property, given a finite sequence of natural numbers $\beta_1,\ldots,\beta_n$, one can construct a space $X$ whose $i$-th Betti number for $1\leq i\leq n$ is exactly $\beta_i$. Namely, take a wedge of spheres of dimensions from $1$ to $n$ with the $i$-dimensional sphere occurring $\beta_i$ times. The converse is not true; for instance, the torus $S^1\times S^1$ and $S^1\vee S^1\vee S^2$ have the same Betti numbers but are not homotopy equivalent.

In a sense, therefore, a wedge of spheres is the simplest possible homotopy type consistent with the given sequence of Betti numbers. Notice however, that by the additivity property, the homology groups of a wedge of spheres must necessarily be free abelian, as
\[
\widetilde{H}_i(S^n)=\begin{cases}\Z;&i=n,\\
0;&\text{otherwise.}
\end{cases}
\]
This crucial property of $S^n$ is expressed by saying that $S^n$ is a Moore space $M(\Z,n)$ (see \cite[Example 2.40]{hatcher} or \cite[Section 1.3]{baues}). Not all homology groups one encounters while studying finite (ordered) simplicial complexes are free abelian, however, as a finite cell structure only leads to finitely generated abelian groups, which can also contain torsion, such as for instance $H_1(\R P^2)=\Z_2$. So, if one wishes to use the wedge sum operation as above in order to describe the ``simplest possible'' homotopy type consistent with a given sequence of finitely generated abelian homology groups, more general Moore spaces might be needed:

\begin{definition}
Suppose $A$ is an abelian group and $n\geq 1$ is an integer. Then a {\em Moore space} $M(A,n)$ is a CW complex, assumed to be simply connected if $n>1$, whose reduced homology is given by
\[
\widetilde{H}_i(X)=\begin{cases}A;&i=n,\\
0;&\text{otherwise.}
\end{cases}
\]
\end{definition}

For $n>1$, a Moore space is uniquely determined up to homotopy by the choice of $A$ and $n$ (see \cite[Example 4.34]{hatcher}). A Moore space $M(\Z_m,1)$ can be constructed by attaching a $2$-cell to $S^1$ by a map of degree $m$. For example, $M(\Z_2,1)=\R P^2$. A Moore space $M(\Z_m,n)$ can then be obtained as the $(n-1)$-fold suspension of $M(\Z_m,1)$.

One final important technique that works for ordered simplicial complexes is that of simplicial collapses: we say that the simplex $\tau$ is a {\em free face} of the complex, if it is contained in exactly one maximal\footnote{A simplex in a given complex is called {\em maximal} if it is not a proper face of any other simplex. Such simplices will also be referred to as {\em maximal faces} of the complex.} simplex $\sigma$. In this case, the operation of removing all the simplices $\rho$ that are contained in $\sigma$ and contain $\tau$, preserves the homotopy type of the complex. This operation is known as an {\em elementary collapse}\footnote{The definition in this form can be found for example in \cite{welker} or \cite{matousek} and was chosen because of its computational convenience for the purpose at hand. Note, however, that many authors additionally require that $\dim\sigma=\dim\tau+1$, see for instance \cite{cohen}.} and is a basic technique in the field of simple homotopy theory (for an introduction, see \cite{cohen}). Collapsibility for random complexes has been considered in \cite{malen, aronshtam-linial, aronshtam-linial-luczak-meshulam, cohen-costa-farber-kappeler}. For a more general overview of the many models of random complexes, see e.g. \cite{farber-mead-nowik, markstrom-pinto, costa-farber, meshulam-wallach, linial-meshulam, bobrowski-kahle-skraba, kahle-paquette-roldan} or survey papers \cite{kahlesurvey, kahlehandbook, kahlegeometric}.

\section{Methods}
\label{sec:methods}

A classical way of establishing that a CW complex is homotopy equivalent to a wedge of spheres uses the observation that the quotient space $X/B$ of a CW complex $X$ by a contractible subcomplex $B$ is homotopy equivalent to $X$, i.e. $X\simeq X/B$ (see e.g. \cite[Proposition 0.17]{hatcher}, \cite[Lemma 2.2]{bjorner-walker} or \cite[Lemma 4.1.5]{matousek}; see also Figure \ref{fig:easywedge}).

\begin{lemma}
\label{easywedge}
Suppose $X$ is a CW complex and $e_1,\ldots,e_k$ in $X$ are maximal cells such that $X-\{e_1,\ldots,e_k\}$ is contractible. Then, writing $d_i=\dim e_i$,
\[
X\simeq\bigvee_{i=1}^k S^{d_i}.
\]
\end{lemma}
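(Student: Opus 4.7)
The plan is to identify a contractible subcomplex whose collapse produces the desired wedge of spheres directly. Let $B=X\setminus\{e_1,\ldots,e_k\}$. First I would verify that $B$ is a genuine subcomplex of $X$: because each $e_i$ is maximal, no $e_i$ lies in the closure of any other cell, so removing the $e_i$ from $X$ still leaves a union of closed cells, hence a subcomplex. By hypothesis $B$ is contractible, so the standard fact quoted just before the lemma gives $X\simeq X/B$, and it suffices to show $X/B\simeq\bigvee_{i=1}^k S^{d_i}$.

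To analyse the quotient, I would describe its CW structure: $X/B$ has a single $0$-cell (the image of $B$) together with the cells $e_1,\ldots,e_k$. The attaching map of $e_i$ in $X$ is a map $\varphi_i:S^{d_i-1}\to X^{(d_i-1)}$ whose image lies in the topological boundary of $\overline{e_i}$, a union of cells strictly lower than $e_i$ in the face relation. In particular, this image cannot meet any $e_j$ with $j\neq i$, because each such $e_j$ is itself maximal and therefore cannot appear as a proper face of $e_i$. Hence $\varphi_i$ factors through $B$, and in $X/B$ the attaching map of each $e_i$ becomes a constant map to the basepoint. A $d_i$-cell attached to a point along a constant map is precisely $S^{d_i}$, and the $k$ cells share this single basepoint, so $X/B=\bigvee_{i=1}^k S^{d_i}$, as required.

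There is really no hard step here: the lemma is a clean packaging of the contractible-subcomplex collapse technique, and the only point that needs care is the bookkeeping around maximality, namely the observation that maximality of all the $e_i$ simultaneously forces each attaching map $\varphi_i$ to land in $B$. Once that is clear, the CW description of $X/B$ and the identification with a wedge of spheres are automatic from the definitions.
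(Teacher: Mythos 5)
Your proposal is correct and follows exactly the paper's route: quotient by the contractible subcomplex $B=X-\{e_1,\ldots,e_k\}$ and identify $X/B$ with the wedge of spheres. The paper's proof is a one-liner that simply asserts $X/B\simeq\bigvee_{i=1}^k S^{d_i}$; you have merely spelled out the (correct) cell-level justification, namely that maximality forces each attaching map to land in $B$ and hence become constant in the quotient.
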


\begin{proof}
Since $B:=X-\{e_1,\ldots,e_k\}$ is contractible, $X\simeq X/B\simeq\bigvee_{i=1}^k S^{d_i}$.
\end{proof}

\begin{figure}[htb]
\centering
\includegraphics[height=75pt]{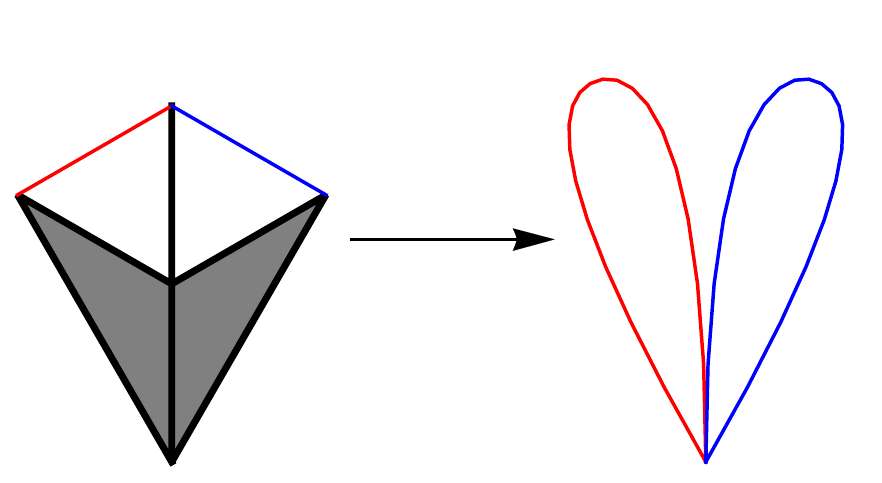}
\caption{The homotopy equivalence of Lemma \ref{easywedge}.}
\label{fig:easywedge}
\end{figure}

This suggests the following strategy to prove that a directed flag complex has the homotopy type of a wedge of spheres: collapse $X=\dFl(G)$ as much as possible, and apply Lemma \ref{easywedge}. Note that this relies on being able to find the relevant set of maximal simplices. One can use a heuristic greedy approach:
\begin{itemize}
\item Look at each maximal simplex in the complex and check if removing it would decrease the Betti number in the corresponding dimension by one (and preserve the other Betti numbers). If so, remove the simplex.
\item In the favourable case, this eventually leads to an acyclic complex. Check whether this complex is contractible (e.g. by showing that it is collapsible or that the fundamental group is trivial). If so, conclude that the initial complex is a wedge of spheres.
\end{itemize}
In the first step we are only removing the simplex, while leaving its faces intact. In principle, this could cause new simplices to become maximal, namely some of these faces. However, this does not actually happen because the condition on Betti numbers ensures that each face is also contained in at least one other maximal simplex. 
So, indeed, when the procedure terminates we will have selected a set of simplices that are maximal in the original complex.

A concrete procedure based on this outline called \textsf{pop-everything} is described in Appendix \ref{sec:algorithms}. Note that this approach works only under the most favourable circumstances, so alternative ideas are needed to treat other cases. The following lemma is a straightforward generalisation of Lemma \ref{easywedge}, but significantly more powerful (see also Figure \ref{fig:genwedge}).

\begin{lemma}
\label{genwedge}
Let $X$ be a CW complex and $A$ a subcomplex of $X$. Suppose $e_1,\ldots,e_k$ in $A$ are cells which are maximal in $X$ such that $A-\{e_1,\ldots,e_k\}$ is contractible. Then, writing $d_i=\dim e_i$,
\[
X\simeq (X-\{e_1,\ldots,e_k\})\vee\bigvee_{i=1}^k S^{d_i}\simeq (X\cup CA)\vee\bigvee_{i=1}^k S^{d_i}.
\]
\end{lemma}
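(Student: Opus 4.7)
The plan is to deduce both homotopy equivalences from the single principle that quotienting a CW complex by a contractible subcomplex preserves homotopy type (the same principle that underlies Lemma \ref{easywedge}), applied twice with different choices of contractible subcomplex.

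First I would verify the structural observation that $B := A - \{e_1,\ldots,e_k\}$ is a subcomplex of $X$ and that $\partial e_i \subseteq B$ for each $i$. Since $A$ is a subcomplex and $e_i \in A$, we have $\partial e_i \subseteq A$; and since each $e_j$ is maximal in $X$, no $e_j$ can be a proper face of another $e_i$, so $\partial e_i$ avoids $\{e_1,\ldots,e_k\}$ entirely. In particular $B$ is closed under taking faces. By hypothesis $B$ is contractible, so $X \simeq X/B$. In the quotient $X/B$, each $e_i$ has its attaching map collapsed to the basepoint, turning its closure into a sphere $S^{d_i}$ wedged on at the basepoint; meanwhile the subcomplex $X - \{e_1,\ldots,e_k\}$ descends to $(X-\{e_1,\ldots,e_k\})/B$, which is itself homotopy equivalent to $X - \{e_1,\ldots,e_k\}$ by the same principle. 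Reading off $X/B$ as a wedge yields the first claimed equivalence.

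For the second equivalence it suffices to prove $X \cup CA \simeq X - \{e_1,\ldots,e_k\}$; wedging both sides with $\bigvee_{i=1}^k S^{d_i}$ and composing with the first equivalence then gives the result. Writing $v$ for the cone apex, the cells of $X \cup CA$ are those of $X$, the vertex $v$, and a cell $\sigma \ast v$ for each $\sigma \in A$. Using maximality of $e_i$ in $X$ together with the structure of $CA$, the only cell of $X \cup CA$ strictly containing $e_i$ is $e_i \ast v$, so $e_i$ is a free face paired with the maximal cell $e_i \ast v$. Performing these $k$ elementary collapses leaves the subcomplex $(X-\{e_1,\ldots,e_k\}) \cup CB$, and then collapsing the contractible subcomplex $CB$ (contractible as a cone on the contractible $B$) delivers $X - \{e_1,\ldots,e_k\}$ up to homotopy.

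The main thing requiring care is the bookkeeping of incidences: namely, that $B$ is in fact a subcomplex, that each $e_i$ has no unnoticed coface inside $CA$, and that the $k$ elementary collapses are independent because the pairs $\{e_i, e_i \ast v\}$ are pairwise disjoint. All three points reduce to the same maximality hypothesis on the $e_i$ in $X$ and so are essentially immediate once spelled out; there is no serious topological obstacle beyond this careful accounting.
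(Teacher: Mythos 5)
Your proof is correct, and it is worth comparing to the paper's. For the first equivalence you are doing essentially the same thing as the paper, just described at the level of cells: the paper writes $X=A\cup B$ with $B=X-\{e_1,\ldots,e_k\}$, quotients by the contractible intersection $A\cap B=A-\{e_1,\ldots,e_k\}$ to get $X\simeq\frac{A}{A\cap B}\vee\frac{B}{A\cap B}\simeq A\vee B$, and then identifies $A\simeq\bigvee_i S^{d_i}$ by Lemma \ref{easywedge}; your quotient $X/(A-\{e_1,\ldots,e_k\})$ is literally the same space, read off directly as $(X-\{e_1,\ldots,e_k\})/B\vee\bigvee_i S^{d_i}$. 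Where you genuinely diverge is the second equivalence. The paper gets it abstractly via $X-\{e_1,\ldots,e_k\}\simeq\frac{X-\{e_1,\ldots,e_k\}}{A-\{e_1,\ldots,e_k\}}\simeq X/A\simeq X\cup CA$, invoking the standard mapping-cone identification $X/A\simeq X\cup CA$; you instead work inside $X\cup CA$ with explicit elementary collapses of the free pairs $(e_i,e_i\ast v)$, landing on $(X-\{e_1,\ldots,e_k\})\cup CB$ and then crushing the contractible cone $CB$. Your route buys a more constructive, combinatorial argument that never invokes the cofibration fact $X/A\simeq X\cup CA$ and that matches well the simplicial implementation in the paper's algorithms (where coning and collapsing are the primitive operations); the cost is the incidence bookkeeping you rightly flag, all of which does reduce to maximality of the $e_i$ in $X$ as you say. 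One small caution: your final step should be read as quotienting by the contractible subcomplex $CB$ (giving $(X-\{e_1,\ldots,e_k\})/B\simeq X-\{e_1,\ldots,e_k\}$) rather than as a further simple-homotopy collapse, since contractibility of $CB$ alone does not give collapsibility; with that reading the argument is complete.
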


\begin{proof}
Let $B:=X-\{e_1,\ldots,e_k\}$. Then $A\cap B=A-\{e_1,\ldots,e_k\}$ is contractible and $A\cup B=X$, therefore:
\[
X=A\cup B\simeq\frac{A\cup B}{A\cap B}\simeq\frac{A}{A\cap B}\vee\frac{B}{A\cap B}\simeq A\vee B.
\]
Now note that by Lemma \ref{easywedge}, we have
\[
A\simeq\bigvee_{i=1}^k S^{d_i}
\]
and since $A\cap B=A-\{e_1,\ldots,e_k\}$ is contractible, we also have
\[
B=X-\{e_1,\ldots,e_k\}\simeq \frac{X-\{e_1,\ldots,e_k\}}{A-\{e_1,\ldots,e_k\}}\simeq X/A\simeq X\cup CA.
\]
\end{proof}

\begin{figure}[htb]
\centering
\includegraphics[height=135pt]{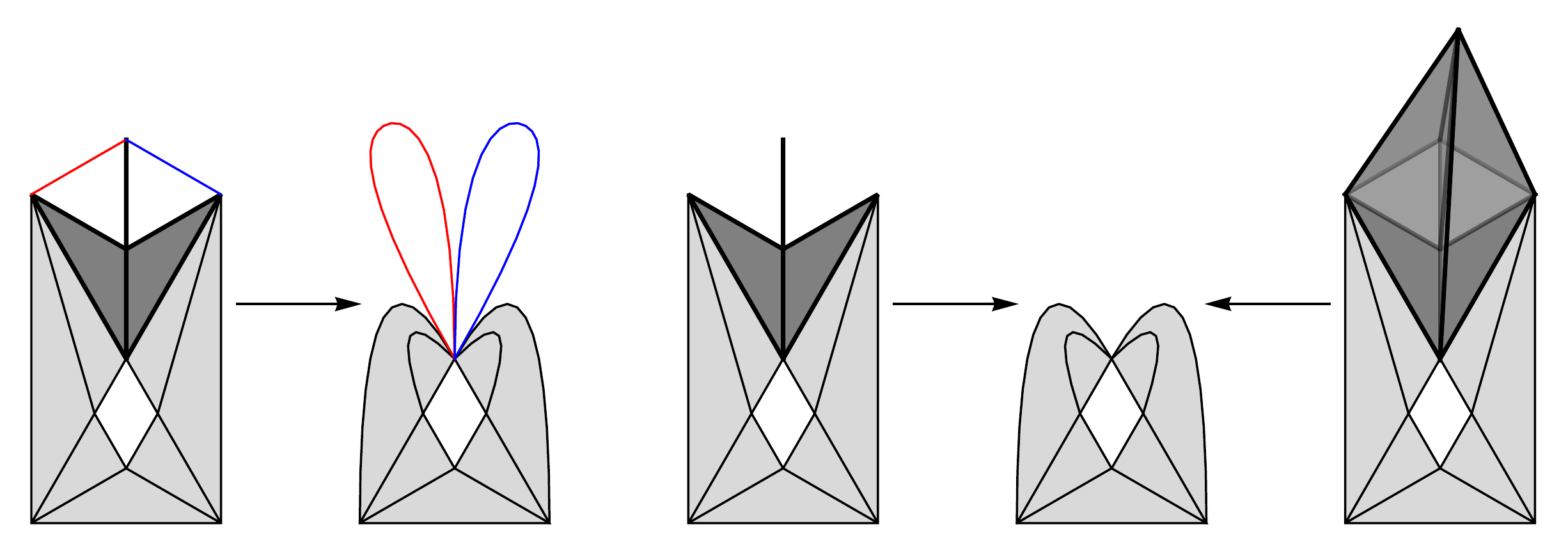}
\caption{The homotopy equivalences of Lemma \ref{genwedge}. The first arrow is the homotopy equivalence of $A\cup B$ and $\frac{A}{A\cap B}\vee\frac{B}{A\cap B}$, which in turn is homotopy equivalent to $A\vee B$ (not pictured). The other two arrows show how $B\simeq\frac{B}{A\cap B}\simeq X\cup CA$.}
\label{fig:genwedge}
\end{figure}

This allows the strategy described above to be generalised to an iterative approach to proving that the directed flag complex $X=\dFl(G)$ has the homotopy type of a wedge of spheres:
\begin{itemize}
\item To initialise, we define $X_0=X$.
\item At the $j$-th step we first collapse the complex $X_j$ as much as possible to a complex $X_j'$ and then use Lemma \ref{genwedge} to simplify it to a complex $X_{j+1}$, by splitting off a wedge of spheres wedge summand (up to homotopy).
\item If this eventually leads to a contractible complex, we can conclude that $X$ itself is a wedge of spheres.
\end{itemize}
Here, the second bullet point is performed by finding a subspace $A$ of $X_j'$ that has the homotopy type of a wedge of spheres and then show that removing some simplices $e_1,\ldots,e_k$ from $A$ which are maximal in $X_j'$ yields a contractible subcomplex (this essentially amounts to using the procedure arising from Lemma \ref{easywedge} on $A$). Then we can proceed in one of two ways provided by Lemma \ref{genwedge}:
\begin{itemize}
\item Either the simplices $e_1,\ldots,e_k$ are removed from $X_j'$, yielding the simpler complex $X_{j+1}=X_j'-\{e_1,\ldots,e_k\}$. We refer to this as ``popping the simplices''.
\item Alternatively, $A$ is coned off, yielding the simpler complex $X_{j+1}=X_j'\cup CA$. We refer to this as ``coning off the subcomplex''.
\end{itemize}
It is in general far from clear how to find a suitable subcomplex $A$ at each step. In the cases under consideration, however, this can be achieved using various heuristics to find $A$, including the following:
\begin{itemize}
\item Find nice homology cycles in $X_j'$. e.g. whose supports have the homotopy type of a sphere and try to use these supports.
\item Try to use the union of all top-dimensional simplices in $X_j'$. In case this union is disconnected, try using its components instead.
\item Build $A$ from the simplices of $X_j'$ using a greedy approach, one simplex at a time, each time checking that the reduced homology is either trivial or concentrated in a single degree.
\end{itemize}
It is important to note that the outcome of any such procedure will depend heavily on the sequence of collapses chosen. Even if starting with a collapsible space, choosing the ``wrong'' sequence of collapses can lead to a non-collapsible space. See for instance \cite{lofano-newman}, which shows that attempts to collapse a standard simplex can already get stuck and in fact seem increasingly likely to do so as the dimension of the simplex increases. Furthermore, the outcome will also depend on the choice of the subcomplex $A$ (it could be homotopy equivalent to a single sphere or a wedge of many spheres of possibly different dimensions), as well as the choice of whether to cone this subcomplex off, or pop its simplices, in which case the exact choice of simplices to pop will also matter.

A concrete procedure based on the above outline called \textsf{cone-and-collapse} is described in Appendix \ref{sec:algorithms}. Note that this particular procedure is based on coning off the subcomplex $A$ at each step rather than popping the simplices as this turned out to be less likely to get stuck for the complexes it was tested on (the drawback is that a coning operation will temporarily increase the size of the complex thus making it more computationally expensive). The procedure \textsf{cone-and-collapse} is already powerful enough to show that the directed flag complex of the C. Elegans connectome has the homotopy type of a wedge of spheres, however, certain choices (fully detailed in Appendix \ref{sec:algorithms}) have to be made ``correctly'' to avoid the issues described in the previous paragraph.

More generally, Lemma \ref{genwedge} can also be used to simplify the topology of spaces which are not necessarily wedges of spheres, by splitting off as many spherical wedge summands as possible and leaving a remainder whose topology can then be analysed separately.

Despite its apparent versatility, applying the procedure just described can be time consuming, so it is beneficial to avoid it when shortcuts are available. In certain cases, it is possible to completely determine the homotopy type already from the homology of the space. One sufficient condition for this to occur is described in the following proposition taken from \cite[Example 4C.2]{hatcher} (note the similarity to uniqueness of Moore spaces mentioned in Section \ref{sec:background}):

\begin{proposition}\label{moorewedge}
Let $n>1$ and $k,l\geq 0$. Let $A$ be a finite abelian group. Suppose $X$ is a simply connected CW complex whose reduced homology is given by
\[
\widetilde{H}_i(X)=\begin{cases}\Z^k\oplus A;&i=n,\\
\Z^l;&i=n+1,\\
0;&\text{otherwise.}
\end{cases}
\]
Then
\[
X\simeq\bigvee_k S^n\vee\bigvee_l S^{n+1}\vee M(A,n),
\]
where $M(A,n)$ is the Moore space of $A$ in degree $n$. In particular, if $H_n(X)$ is also free (i.e. $A=0$), $X$ is a wedge of spheres.
\end{proposition}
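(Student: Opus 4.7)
The plan is to exhibit a model space $Y := \bigvee_k S^n \vee M(A,n) \vee \bigvee_l S^{n+1}$ together with a map $Y \to X$ inducing an isomorphism on integral homology, and then to invoke Whitehead's theorem; since $n>1$ the space $Y$ is simply connected, as is $X$ by hypothesis, so a homology isomorphism between them is automatically a homotopy equivalence. I would build the map in two stages corresponding to the two nonzero reduced homology groups of $X$.

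First, set $W := \bigvee_k S^n \vee M(A,n)$. By Hurewicz (iterated, using that $X$ is simply connected and $\widetilde{H}_i(X)=0$ for $i<n$), $X$ is $(n-1)$-connected and $\pi_n(X)\cong H_n(X)=\Z^k\oplus A$. Choose maps $S^n\to X$ representing generators of the free summand $\Z^k$. For each cyclic summand $\Z_{m_i}$ of $A=\bigoplus_i \Z_{m_i}$, a generator is represented by some $\phi_i\colon S^n\to X$ of order $m_i$ in $\pi_n(X)$; the resulting null-homotopy of $m_i\phi_i$ lets $\phi_i$ extend across the $(n+1)$-cell of $M(\Z_{m_i},n)$, which is attached to $S^n$ by a degree-$m_i$ map. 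Wedging all these extensions together produces $f\colon W\to X$, an isomorphism on $\pi_n$ and hence on $H_n$.

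Next, the companion map $g\colon\bigvee_l S^{n+1}\to X$ must detect $H_{n+1}(X)$. Replacing $f$ by the inclusion of the mapping cylinder, view $W$ as a subcomplex of $X$. The long exact homology sequence of $(X,W)$, together with $H_{n+1}(W)=0$ and $f_*$ being an iso on $H_n$, gives $H_{n+1}(X,W)\cong H_{n+1}(X)=\Z^l$. Moreover, the pair $(X,W)$ is $n$-connected: both spaces are $(n-1)$-connected since $n>1$, and the homotopy long exact sequence forces $\pi_n(X,W)=0$ from the isomorphism on $\pi_n$. The relative Hurewicz theorem therefore yields $\pi_{n+1}(X,W)\cong H_{n+1}(X,W)=\Z^l$, and the boundary map $\pi_{n+1}(X,W)\to\pi_n(W)$ lies in the kernel of the iso $\pi_n(W)\to\pi_n(X)$, so it vanishes. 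Consequently $\pi_{n+1}(X)\to\pi_{n+1}(X,W)$ is surjective; lifting a basis produces maps $S^{n+1}\to X$ which assemble into $g$, and a diagram chase through the identifications $\pi_{n+1}(X,W)\cong H_{n+1}(X,W)\cong H_{n+1}(X)$ shows that $g_*$ is an isomorphism on $H_{n+1}$.

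Finally, the wedge $f\vee g\colon Y\to X$ is an isomorphism on every $\widetilde{H}_i$ (the two summands contribute in disjoint degrees, and both sides vanish outside $\{n,n+1\}$), so Whitehead's theorem delivers the desired homotopy equivalence. The principal obstacle is step three: the absolute Hurewicz map $\pi_{n+1}(X)\to H_{n+1}(X)$ need not be surjective in general, and the relative Hurewicz detour via the pair $(X,W)$ is precisely what is needed to realise the $S^{n+1}$ summands by genuine maps into $X$.
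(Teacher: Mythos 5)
Your argument is correct, but it takes a genuinely different route from the paper, which offers no proof of its own and simply cites Hatcher's Example 4C.2. Hatcher's argument runs through Proposition 4C.1 on minimal cell structures: one replaces $X$ by a homotopy equivalent CW complex whose cells in dimensions $n$ and $n+1$ correspond to generators and relations of $H_n$ together with generators of the free group $H_{n+1}$, and then observes that since the $n$-skeleton is a wedge of $n$-spheres with $n\geq 2$, the attaching maps of the $(n+1)$-cells are determined up to homotopy by degrees, so the complex splits as the stated wedge. You instead build the candidate wedge $Y$ directly, realise $H_n(X)\cong\pi_n(X)$ by absolute Hurewicz to get the map on the $\bigvee_k S^n\vee M(A,n)$ part, and then use the relative Hurewicz theorem for the pair $(X,W)$ to lift a basis of $H_{n+1}(X)\cong\pi_{n+1}(X,W)$ to $\pi_{n+1}(X)$ --- correctly identifying that the absolute Hurewicz map in degree $n+1$ need not be onto, which is the one real subtlety --- and finish with the homology Whitehead theorem. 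Both proofs are standard and complete; yours is self-contained modulo Hurewicz and Whitehead, whereas Hatcher's yields slightly more (an explicit minimal CW structure on $X$ itself). The only cosmetic slip is the phrase ``the boundary map \dots lies in the kernel,'' which should read ``the image of the boundary map lies in the kernel''; the conclusion that it vanishes is of course right.
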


More generally, it is known that a simply connected space is homotopy equivalent to a wedge of Moore spaces if and only if the Hurewicz homomorphism $h_n:\pi_n(X)\to H_n(X)$ is split surjective for every $n$ \cite[Proposition 2.6.15]{baues}. Whether this can be used in a computational setting is not completely clear. If it can, it could potentially lead to an approach to detecting wedges of Moore spaces which avoids simplicial collapses altogether.

\section{Tournaments}
\label{sec:tournaments}

This section summarises some results obtained by testing these techniques on directed flag complexes $\dFl(T)$ of tournaments $T$. As explained in Section \ref{sec:background} these appear to be historically one of the first special cases of directed flag complexes considered in the literature. In the following, a description in terms of wedge sums is given for the various kinds of homotopy types that occur. The exact numbers of wedge summands are not listed here for every single special case, but see Remark \ref{github}. Note that there are many cases where the directed flag complexes of two nonisomorphic tournaments have the same homotopy type, but can nonetheless be distinguished by the number of simplices occurring in them, or vice versa.

\begin{remark} \label{github}
For the interested reader, the exact numbers of wedge summands, or equivalently in the case at hand, Betti vectors and torsion coefficients, are available online \cite{github} in the file \texttt{tournaments.dat}. There is also a \textsc{Mathematica} notebook there, called \texttt{Tournaments.nb}, which reproduces a part of these computations from scratch. The ones that are not reproduced there, as well as the corresponding face vectors (simplex counts) of the various complexes, if needed, are available from the author upon request.
\end{remark}

We begin with the following observation, which curiously does not seem to have appeared in the literature, despite a few special cases being proved in \cite{burzio-demaria-2}:

\begin{theorem}
\label{pi1free}
Suppose $T$ is a tournament and let $X=\dFl(T)$. Then $\pi_1(X)$ is a free group.
\end{theorem}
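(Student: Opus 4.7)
The plan is to study the $2$-skeleton $X^{(2)}$ of $X=\dFl(T)$, since $\pi_1(X)=\pi_1(X^{(2)})$ depends only on cells of dimension at most $2$. The cell structure to exploit is specific to tournaments: because $T$ has exactly one directed edge between each pair of distinct vertices and no reciprocal pairs, $X$ is an honest (unordered) simplicial complex whose $1$-skeleton is the complete graph $K_n$ on $V(T)$, while its $2$-cells correspond bijectively to the transitive triples of $T$ (so that the $3$-cycles of $T$ contribute no $2$-cell).

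First I would invoke R\'edei's theorem to fix a Hamiltonian path $v_1\to v_2\to\cdots\to v_n$ of $T$ and use its $n-1$ edges as a spanning tree in $K_n$. Collapsing this tree turns $X^{(2)}$ into a CW complex with a single $0$-cell, one loop $g_{i,j}$ per pair $\{i,j\}$ with $j-i\ge 2$, and one $2$-cell per transitive triple. The attaching map of the $2$-cell coming from a transitive triple $\{v_i,v_j,v_k\}$ with $i<j<k$ is then a word of length at most $3$ in the generators: length $1$ when both $(i,j)$ and $(j,k)$ are Hamiltonian (reducing to $g_{i,i+2}^{\pm 1}$), length $2$ when exactly one of them is, and length $3$ otherwise.

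I would then apply Tietze transformations, processing the generators in order of increasing span $j-i$. Whenever the edge $\{v_i,v_j\}$ participates in some transitive triple with some middle vertex $v_k$, the corresponding relator can be solved to express the longest-span generator of the triple in terms of strictly shorter-span ones, and this move eliminates the generator along with the relator. The generators surviving after all such eliminations correspond precisely to the edges $\{v_i,v_j\}$ through which no transitive triple of $T$ passes, and are therefore not touched by any remaining relator.

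The main obstacle is to verify that each transitive triple \emph{not} used during the elimination yields a relator that is already trivial in the free group on the surviving generators. The tool to use here is the higher cell structure of $X$: every transitive $4$-clique of $T$ is a $3$-simplex whose boundary is a $2$-sphere assembled from four triangular $2$-cells, so in $\pi_1$ it forces consistency between two different routes for eliminating the same generator through two different middle vertices. With a lexicographically coherent choice of which triple is used at each elimination step, together with a direct combinatorial check in the corner cases where transitive $4$-cliques are scarce (for instance, the Paley tournament on seven vertices has none), these sphere relations should make every leftover relator trivial, producing a presentation with no relators and hence showing that $\pi_1(X)$ is free on the surviving generators.
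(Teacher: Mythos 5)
There is a genuine gap at the step you yourself flag as ``the main obstacle.'' Your choice of spanning tree (the Hamiltonian path from R\'edei's theorem) leaves relators of length $3$, so after eliminating, for each non-tree edge $\{v_i,v_k\}$, \emph{one} transitive triple through it, every \emph{other} transitive triple with the same long edge but a different middle vertex becomes a relator of the form $g_{ij}g_{jk}g_{j'k}^{-1}g_{ij'}^{-1}$ among surviving generators, and there is no reason for such a word to be trivial in the free group on the survivors. Your proposed repair via transitive $4$-cliques cannot work as stated: since $\pi_1(X)=\pi_1(X^{(2)})$, the $3$-simplices impose no new relations in $\pi_1$; at best their boundaries give identities among the $2$-cell relators (syzygies) that one could try to use to show the leftover relators are consequences of the chosen ones, but you do not carry this out, and you concede that tournaments such as the Paley tournament on seven vertices have no transitive $4$-cliques at all, leaving only an unspecified ``direct combinatorial check.'' As written, the argument establishes a presentation with length-$3$ relators and then asserts, rather than proves, that the group is free.

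The paper's proof avoids this difficulty entirely by a different choice of contractible subcomplex: instead of a spanning tree, it takes the full simplicial star of a single vertex $v_1$ inside the $2$-skeleton, namely all vertices, all edges incident to $v_1$, all edges $e_{ij}$ whose triangle with $v_1$ is transitive, and all transitive triangles containing $v_1$. This is a simplicial cone with apex $v_1$, hence contractible, and collapsing it leaves as generators only the edges $e_{ij}$ that form a $3$-cycle with $v_1$. The decisive point is that any transitive triangle not containing $v_1$ has at least one edge already in the collapsed subcomplex, so every surviving relator has length at most $2$: it is trivial, kills one generator, or identifies two generators. A quotient of a free group by such relators is manifestly free, with no case analysis on longer words required. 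If you want to salvage your route, you would need to replace the Hamiltonian-path tree by a subcomplex with this ``every transitive triangle meets it in an edge'' property; that is exactly what the star of $v_1$ provides.
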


\begin{proof}
Let $Y=X^{(2)}$ be the $2$-skeleton of $X$. Then $\pi_1(Y)\cong\pi_1(X)$ (see e.g. \cite[Proposition 1.26]{hatcher}). Note that $Y$ consists of vertices $v_1,\ldots,v_n$, a single edge $e_{ij}$ between $v_i$ and $v_j$ for any $i<j$ (note that this edge may have either the orientation $v_i\to v_j$ or $v_j\to v_i$ in the tournament itself, but this information is irrelevant for the topology of the complex) and a triangle $t_{ijk}$ for any $i<j<k$ such that the tournament induced on the vertices $v_i,v_j$ and $v_k$ is transitive.

We can assume that the vertices are ordered so that there exists $m\in\{1,\ldots,n\}$ such that the edge orientations in $T$ are given as $v_i\to v_1$ for $i\leq m$ and $v_1\to v_i$ for $i\geq m+1$. In other words, we partition the vertices according to whether they have an incoming or an outgoing edge to $v_1$. This induces a partition of the edges into five subsets:
\begin{itemize}
\item with $v_1$ as endpoint: $\mathcal E=\{e_{1j}\mid 1<j\leq n\}$,
\item incoming to incoming: $\mathcal X=\{e_{ij}\mid 1<i<j\leq m\}$,
\item outgoing to outgoing: $\mathcal Y=\{e_{ij}\mid m<i<j\}$,
\item incoming to outgoing: $\mathcal Z=\{e_{ij}\mid i\leq m<j\text{ and }v_i\to v_j\}$,
\item outgoing to incoming: $\mathcal W=\{e_{ij}\mid i\leq m<j\text{ and }v_j\to v_i\}$.
\end{itemize}

We define a $2$-dimensional subcomplex $A$ in $Y$ consisting of all vertices of $Y$, all edges in $\mathcal E\cup \mathcal X\cup \mathcal Y\cup \mathcal Z$ and all triangles $t_{1jk}$ of $Y$ with $1<j<k$. Note that topologically, this is a cone over $\mathcal X\cup \mathcal Y\cup \mathcal Z$ with apex $v_1$, so it is a contractible subcomplex. Therefore $\pi_1(Y)\cong\pi_1(Y/A)$. The complex $Y/A$ has a single vertex $v$, edges given as elements of $\mathcal W$ and the $2$-cells as $t_{ijk}$ with $1<i<j<k$.

From this cell structure, we can write down the group presentation as follows:
\[
\pi_1(Y/A)=\langle \mathcal W\mid \tau_{ijk}, 1<i<j<k\rangle
\]
where $\tau_{ijk}$ is the word describing the attaching map of the triangle $t_{ijk}$ (with $1<i<j<k$) in terms of the generators $\mathcal W$. More specifically, we have four possibilities for $\tau_{ijk}$ depending on the position of $m$ with respect to $i,j$ and $k$:
\begin{itemize}
\item If $i<j<k\leq m$, all the edges of $t_{ijk}$ lie in $\mathcal X$, and therefore in $A$, so the corresponding word $\tau_{ijk}$ is trivial.
\item If $i<j\leq m<k$, the edge $e_{ij}$ lies in $\mathcal X$, but the other two edges of $t_{ijk}$ lie in $\mathcal Z\cup\mathcal W$, so the corresponding word $\tau_{ijk}$ is either trivial, consists of a single element $e_{ij}\in\mathcal W$ or is a product of two elements $e_{i_1j}e_{i_2j}^{-1}$ where $e_{i_1j},e_{i_2j}\in\mathcal W$.
\item If $i\leq m<j<k$, the edge $e_{jk}$ lies in $\mathcal Y$, but the other two edges of $t_{ijk}$ lie in $\mathcal Z\cup\mathcal W$, so the corresponding word $\tau_{ijk}$ is either trivial, consists of a single element $e_{ij}\in\mathcal W$ or is a product of two elements $e_{ij_1}e_{ij_2}^{-1}$ where $e_{ij_1},e_{ij_2}\in\mathcal W$.
\item If $m<i<j<k$, all the edges of $t_{ijk}$ lie in $\mathcal Y$, and therefore in $A$, so the corresponding word $\tau_{ijk}$ is trivial.
\end{itemize}

This means that each relator either corresponds to a generator in $\mathcal W$ being trivial or to two generators in $\mathcal W$ being equal. Applying the appropriate sequence of Tietze transformations therefore shows that the group is free.
\end{proof}

In particular, this means that in order to verify that $X=\dFl(T)$ is simply connected for a tournament $T$, it suffices to check that $H_1(X)$ is trivial, as $H_1(X)$ is the abelianisation of $\pi_1(X)$ (see \cite[Theorem 2A.1]{hatcher}). Combined with Proposition \ref{moorewedge} this allows us in some cases to determine the homotopy types of directed flag complexes of a large number of tournaments just by computing their homology.

\subsection{Regular Tournaments}

In this section, a classification is given of the homotopy types of directed flag complexes of regular tournaments with up to $13$ vertices whose isomorphism types are listed in the collection \cite{mckay}.

\begin{proposition}
Suppose $T$ is a regular tournament with $3\leq n\leq 13$ vertices ($n$ odd). Then $T$ is a wedge of Moore spaces. The number of different isomorphism types of $T$ versus the number of different homotopy types of $\dFl(T)$ is represented in the following table:
\[
\begin{array}{|c|cccccc|}
\hline
n&3&5&7&9&11&13\\
\hline
\text{\# isomorphism types}&1&1&3&15&1223&1495297\\
\hline
\text{\# homotopy types}&1&1&3&8&40&183\\
\hline
\end{array}
\]
Furthermore, these directed flag complexes have the following properties:
\begin{itemize}
\item For $n=3$ and $n=5$, there is a unique homotopy type, namely $S^1$.
\item For $n=7$, the homotopy types are $S^1$, $\bigvee_3 S^2$ and $\bigvee_6 S^2$.
\item For $n=9$, there is exactly one complex with the homotopy type of $S^1$ and exactly one complex with the homotopy type of $S^1\vee\bigvee_3 S^2\vee\bigvee_3 S^3$. The remaining complexes are wedges of a positive number of $2$-spheres. This is also the first time the classification up to isomorphism differs from the one up to homotopy.
\item For $n=11$, there is exactly one complex which is not simply connected, with the homotopy type of $S^1$. There are exactly two complexes which are contractible. Among the other complexes, exactly two contain torsion; their homotopy types are $M(\Z_2,2)\vee\bigvee_5 S^2$ and $M(\Z_2,2)\vee\bigvee_{10} S^2$. The remaining complexes are wedges of various numbers of $2$-spheres and $3$-spheres.
\item For $n=13$, there is exactly one complex which is not simply connected, with the homotopy type of $S^1$. There are $10080$ complexes which are contractible. There are $141$ complexes containing torsion, each time in the form of a single $M(\Z_2,2)$ wedge summand. Among the complexes containing torsion, there are $14$ different homotopy types, and in particular, $8$ complexes are homotopy equivalent to $M(\Z_2,2)$. All the other occurring wedge summands are $2$-, $3$- and $4$-spheres.
\end{itemize}
\end{proposition}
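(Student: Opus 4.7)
The plan is a systematic, computer-assisted sweep of the regular tournaments in \cite{mckay}. For each odd $n\in\{3,5,7,9,11,13\}$, I would enumerate the isomorphism classes, build $X:=\dFl(T)$ (a genuine simplicial complex, as noted in Section~\ref{sec:background}), and compute $H_*(X;\Z)$ via Smith normal form on the chain complex. By Theorem~\ref{pi1free} the group $\pi_1(X)$ is free, so $X$ is simply connected iff $H_1(X)=0$, and more generally the number of $S^1$ wedge summands one expects equals $\rank H_1(X)$. Whenever the computed $H_*(X)$ is concentrated in two consecutive degrees with torsion restricted to the lower one, Proposition~\ref{moorewedge} reads off the homotopy type as a wedge of spheres plus at most one Moore-space factor. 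This single observation covers all $n\le 7$ cases, and for $n\in\{11,13\}$ it explains the $M(\Z_2,2)$ summands in the table as arising from $2$-torsion in $H_2(X)$ when the rest of the homology is free and supported in degrees $2$ and $3$ (or $4$).

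The remaining cases are few and are handled with the \textsf{cone-and-collapse} procedure from Section~\ref{sec:methods}. The exceptional $n=9$ complex of predicted type $S^1\vee\bigvee_3 S^2\vee\bigvee_3 S^3$, and any similar case whose homology spans three or more degrees, I would attack by iterated application of Lemma~\ref{genwedge}: find a small subcomplex $A\subseteq X_j'$ homotopy equivalent to a wedge of spheres of a single dimension, split it off, and recurse. For complexes with $H_1(X)\neq 0$ one splits off the appropriate number of $S^1$ summands first, verifies that the residual complex is simply connected, and then applies Proposition~\ref{moorewedge} or further \textsf{cone-and-collapse} to finish. Contractible complexes (the $10080$ cases at $n=13$) are recognised directly by collapsing them to a point using \textsf{pop-everything}-style reductions.

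The main obstacle is scale combined with the fragility of \textsf{cone-and-collapse}. At $n=13$ the census contains roughly $1.5\times 10^6$ isomorphism classes, so one must rely on the fast simply connected branch via Proposition~\ref{moorewedge} for the overwhelming majority and deploy the collapsing heuristics only on a small set of outliers. As observed in Section~\ref{sec:methods}, a poor collapse ordering or a poor choice of $A$ can stall the procedure, so for the exceptional tournaments one may have to try several orderings or supply an ad hoc, hand-chosen $A$ to push through. Correctness is independently auditable, since each splitting step records a concrete instance of Lemma~\ref{easywedge} or Lemma~\ref{genwedge} and the final homotopy equivalence is the composition of these. The counting table is then the aggregation of the resulting decompositions by ``Betti vector plus torsion coefficients''; uniqueness of simply connected Moore-space wedges, together with the rank-one $S^1$ factor being detected by $H_1$, ensures there is no double counting across homotopy types.
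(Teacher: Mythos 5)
Your proposal follows essentially the same route as the paper: compute integral homology of every complex, use Theorem \ref{pi1free} to deduce simple connectivity from $H_1=0$, read off the homotopy type via Proposition \ref{moorewedge} whenever the homology is supported in two consecutive degrees with torsion only in the lower one, and fall back on \textsf{pop-everything}/\textsf{cone-and-collapse} for the handful of exceptions (the non-simply-connected complexes and, at $n=13$, the $211$ complexes whose homology spans degrees $2$ through $4$). The only cosmetic difference is that the paper dispatches the $10080$ contractible cases and the circle-type cases directly from the homology computation (plus one explicit collapse onto a circle per $n$), rather than running the collapsing machinery on them.
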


\begin{proof}
We can compute integral homology of all of these complexes, for example using \textsc{Mathematica} (see Remark \ref{github}). For each $n$ we find exactly one complex with the homology of the circle\footnote{This particular example in fact agrees with $\mathcal N(n,\frac{n-1}{2})$ of \cite[Definition 3.2]{nervecircular}.} and that complex can be shown to collapse onto the circle. The only other complex with nontrivial $H_1$ occurs for $n=9$. We can show that this complex is homotopy equivalent to $S^1\vee\bigvee_3 S^2\vee\bigvee_3 S^3$ by applying the procedure \textsf{cone-and-collapse} (based on Lemma \ref{genwedge}) to it.

All the remaining complexes have vanishing $H_1$. Since their fundamental group is free by Theorem \ref{pi1free}, this means that all other complexes are simply connected. All homology in degrees $\geq5$ vanishes and the only torsion that occurs in any of the cases is $\torsion(H_2)=\Z_2$. The cases where $H_4$ vanishes are therefore wedges of Moore spaces by Proposition \ref{moorewedge}.

This leaves $211$ complexes remaining with nontrivial $H_4$ (each of these has $13$ vertices). This is a number small enough that treating them using the procedures \textsf{pop-everything} (based on Lemma \ref{easywedge}) and \textsf{cone-and-collapse} becomes feasible. In all $211$ cases, at least one of these procedures is sufficient to establish that the complex at hand is a wedge of spheres.

This concludes the proof that up to homotopy, all these complexes are indeed wedges of Moore spaces. The remaining properties in the statement can therefore be read off from their homology which was computed in the beginning.
\end{proof}

\subsection{Doubly Regular Tournaments}

Doubly regular tournaments are examined next. The collection \cite{mckay} lists them completely up to $27$ vertices.

\begin{proposition}
Let $3\leq n\leq 27$, $n\equiv 3 \pmod 4$. The number of isomorphism types vs. homotopy types of directed flag complexes of doubly regular tournaments is as follows:
\[
\begin{array}{|c|ccccccc|}
\hline
n&3&7&11&15&19&23&27\\
\hline
\text{\# isomorphism types}&1&1&1&2&2&37&722\\
\hline
\text{\# homotopy types}&1&1&1&1&2&11&109\\
\hline
\end{array}
\]
Furthermore, the homotopy types can be listed exactly:
\begin{itemize}
\item For $n=3$, there is one complex with homotopy type $S^1$.
\item For $n=7$, there is one complex with homotopy type $\bigvee_6 S^2$.
\item For $n=11$, there is one complex with homotopy type $M(\Z_2,2)\vee\bigvee_{10}S^2$.
\item For $n=15$, there are two complexes with homotopy type $\bigvee_{70}S^3$.
\item For $n=19$, there are two complexes with homotopy types $\bigvee_{154}S^3\vee S^4$ and $\bigvee_{135}S^3$.
\item For $n=23$, there are $28$ complexes with homotopy types $\bigvee_{k}S^3$ for some $k\in\{90,92,93,97,98,101\}$ (occurring $6$, $2$, $4$, $8$, $6$ and $2$ times, respectively). There are also two complexes with the homotopy type $M(\Z_2,3)\vee\bigvee_{88}S^3$, two complexes with the homotopy type $M(\Z_2,3)\vee\bigvee_{91}S^3$ and two complexes with the homotopy type $\bigvee_{2}M(\Z_2,3)\vee\bigvee_{89}S^3$. Finally, there are two complexes with the homotopy type
\[
\bigvee_{12}M(\Z_2,3)\vee\bigvee_{10}M(\Z_4,3)\vee\bigvee_{67}S^3\vee S^4
\]
and one complex with the homotopy type
\[
M(\Z_{23},3)\vee\bigvee_{23}S^3\vee\bigvee_{45}S^4.
\]
\item For $n=27$, there are $719$ complexes with the homotopy type $\bigvee_{k}S^4$ for various $k$ between $125$ and $410$. There are two complexes with the homotopy type $S^3\vee\bigvee_{456}S^4$ and one with the homotopy type $M(\Z_2,3)\vee S^3\vee\bigvee_{729}S^4$.
\end{itemize}
\end{proposition}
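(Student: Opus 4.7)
The plan is to follow essentially the same computational template used for the regular tournament classification, but applied to McKay's enumeration of doubly regular tournaments. For each $n\in\{3,7,11,15,19,23,27\}$, I would iterate over all isomorphism classes from \cite{mckay}, build the directed flag complex $\dFl(T)$, and compute its integral reduced homology (for example in \textsc{Mathematica}, cf.\ Remark \ref{github}). The enumeration is feasible since the number of isomorphism types grows only to $722$ at $n=27$, although the complexes at large $n$ become sizeable and the integral homology computation is the most expensive step.

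The key simplifying observation is that once integral homology is known, Theorem \ref{pi1free} reduces the question of simple connectedness to the purely homological check $H_1 = 0$. Inspecting the claimed homotopy types, one sees that every non-contractible case listed has nontrivial reduced homology concentrated in at most two consecutive degrees $n,n+1$ with $n\geq 2$, except for the unique $n=3$ tournament (whose complex is a directed $3$-cycle and visibly deformation retracts onto $S^1$). Thus, after verifying from the computation that $H_1(\dFl(T))=0$ for every $T$ with $n\geq 7$ and that the remaining homology lives in a single pair of consecutive degrees, I can invoke Proposition \ref{moorewedge} directly to conclude that each $\dFl(T)$ is homotopy equivalent to $\bigvee_k S^n\vee\bigvee_l S^{n+1}\vee M(A,n)$ for the appropriate $k$, $l$, and finite abelian group $A$ read off from the homology.

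Tallying the resulting homotopy types across isomorphism classes then partitions them into equivalence classes, yielding the counts in the displayed table. The listed wedge summands $M(\Z_2,2)$, $M(\Z_2,3)$, $M(\Z_4,3)$ and $M(\Z_{23},3)$ correspond to the observed torsion subgroups appearing in $H_2$ or $H_3$; notably, the Hadamard-type $23$-vertex example with a $\Z_{23}$ torsion summand still has its homology concentrated in degrees $3$ and $4$ only, so it falls under Proposition \ref{moorewedge} without further work. The mixed-torsion case $\bigvee_{12}M(\Z_2,3)\vee\bigvee_{10}M(\Z_4,3)\vee\bigvee_{67}S^3\vee S^4$ is likewise handled: take $A=\Z_2^{12}\oplus\Z_4^{10}$ in the proposition and use $M(A,3)=\bigvee_{12}M(\Z_2,3)\vee\bigvee_{10}M(\Z_4,3)$.

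The main obstacle I expect is computational rather than conceptual: for $n=27$, integral homology of the $722$ directed flag complexes becomes costly, especially in comparison with $\Z_2$ homology. A secondary, more delicate point is ensuring empirically that no tournament in the list produces homology spread across three or more consecutive degrees; were such a case to arise, Proposition \ref{moorewedge} would no longer apply and one would need to invoke \textsf{cone-and-collapse} or \textsf{pop-everything} from Section \ref{sec:methods}, as was necessary for the exceptional $9$-vertex regular tournament. Assuming (as the proposition implicitly asserts) that the computation reveals no such spread, the argument collapses to the two-step procedure above.
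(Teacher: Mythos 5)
Your proposal matches the paper's proof essentially verbatim: compute integral homology of each complex from McKay's list, observe that for $n>3$ the reduced homology is concentrated in two consecutive degrees with $H_1=0$ (so simple connectedness follows via Theorem \ref{pi1free}), and apply Proposition \ref{moorewedge} to read off the wedge decomposition, with the $n=3$ case being visibly a circle. The only detail the paper adds is the use of the algebraic Morse theory reduction of \cite{leon} to make the $n=27$ homology computation tractable, which addresses exactly the computational bottleneck you identify.
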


\begin{proof}
Compute the integral homology, e.g. using \textsc{Mathematica} (see Remark \ref{github}). For the case $n=27$, the chain complexes are quite big already, but the method of \cite{leon} based on algebraic Morse theory can be used to reduce the computation time. In every case ($n>3$), the conditions of Proposition \ref{moorewedge} are satisfied and the numbers of wedge summands of each type can be read off from the homology.
\end{proof}

\subsection{General Tournaments}

The collection \cite{mckay} also contains the isomorphism types of general tournaments up to $10$ vertices, which can also be classified completely up to homotopy.

\begin{proposition}
Let $2\leq n\leq 10$. The number of isomorphism types vs. homotopy types of directed flag complexes of general tournaments on $n$ vertices is as follows:
\[
\begin{array}{|c|ccccccccc|}
\hline
n&2&3&4&5&6&7&8&9&10\\
\hline
\text{\# isomorphism types}&1&2&4&12&56&456&6880&191536&9733056\\
\hline
\text{\# homotopy types}&1&2&2&3&6&11&21&53&114\\
\hline
\end{array}
\]
These complexes are all homotopy equivalent to wedges of spheres of various dimensions, where $S^1$ appears as a wedge summand at most once in each case.
\end{proposition}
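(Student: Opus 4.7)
The plan follows the template set by the proofs for regular and doubly regular tournaments. First, I would extract the list of isomorphism types of tournaments on $n$ vertices for $2\leq n\leq 10$ directly from McKay's collection \cite{mckay}; this provides the first row of the table. Then, for each isomorphism class representative $T$, I would compute the integral homology of $\dFl(T)$, for example using \textsc{Mathematica} as in Remark \ref{github}, and verify two facts: that no torsion occurs in any degree, and that $H_1(\dFl(T))$ has rank at most $1$.

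Once these two facts are in hand, the rest of the argument is carried by the machinery from Sections \ref{sec:background} and \ref{sec:methods}. By Theorem \ref{pi1free}, $\pi_1(\dFl(T))$ is free, so the rank bound on $H_1$ forces $\pi_1$ to be either trivial or $\Z$. In the simply connected cases, I would first try Proposition \ref{moorewedge}: whenever the reduced homology is torsion-free and concentrated in at most two consecutive degrees, the proposition immediately identifies $\dFl(T)$ as a wedge of spheres. The remaining simply connected cases, where the homology spans more than two degrees, would be handled by the procedures \textsf{pop-everything} and \textsf{cone-and-collapse} from Section \ref{sec:methods}, producing explicit wedge decompositions from the recorded sequences of collapses and coning operations.

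For the cases with $H_1\cong\Z$, I would locate an explicit $1$-cycle representing the generator and either pop its supporting maximal edges or cone off its support; invoking Lemma \ref{genwedge} then splits off a single $S^1$ wedge summand and leaves a simply connected complex, to which the strategy of the preceding paragraph applies. Since exactly one $S^1$ is split off in each such case, the claim that $S^1$ appears as a wedge summand at most once follows automatically. Finally, the second row of the table is obtained by enumerating the distinct Betti vectors produced by the homology computation, since once we know every $\dFl(T)$ is a wedge of spheres, the homotopy type is determined by the Betti vector alone.

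The main obstacle is the heuristic character of \textsf{cone-and-collapse}: as emphasised in Section \ref{sec:methods}, a poor choice of collapsing order or of the subcomplex $A$ can cause the procedure to stall, even on spaces that are in fact collapsible or are genuine wedges of spheres. For $n\leq 10$, however, the complexes have dimension at most $9$ and remain small enough that one can afford to retry with different heuristics for selecting $A$ until the procedure terminates successfully; the complete simplex counts and Betti vectors being available in \texttt{tournaments.dat} (see Remark \ref{github}) then serves as an independent check. The purely homological steps (torsion-freeness, rank bound on $H_1$) are by contrast routine, and the conceptual content reduces to combining Theorem \ref{pi1free}, Proposition \ref{moorewedge}, and Lemma \ref{genwedge}.
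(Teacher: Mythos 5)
Your proposal is correct and follows essentially the same route as the paper: compute integral homology for all isomorphism types from \cite{mckay}, dispose of most cases via Theorem \ref{pi1free} combined with Proposition \ref{moorewedge}, and handle the remainder with \textsf{pop-everything} and \textsf{cone-and-collapse}. Your extra detail on splitting off the single $S^1$ summand via Lemma \ref{genwedge} in the non-simply-connected cases is a reasonable elaboration of what the paper leaves implicit.
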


\begin{proof}
Compute the integral homology, e.g. using \textsc{Mathematica} (see Remark \ref{github}). In every case the homotopy type is uniquely determined by the homology, as the vast majority of cases satisfy the conditions of Proposition \ref{moorewedge} and the rest can be shown to be wedges of spheres using the procedures \textsf{pop-everything} and \textsf{cone-and-collapse}. For $n=10$, the reduction from \cite{leon} can be used to reduce the computation time. Even so, the total computation time for this case was about two days on a laptop PC with a 2-core processor and 32GB of RAM.
\end{proof}

\subsection{Tournaplexes}

This section mostly serves to fill in some details regarding Table 3.1 of the paper \cite{tournaplexes} which introduced the notion of tournaplex, but also as an example application of the methods beyond the case of ordered simplicial complexes, as tournaplexes are a generalisation of them. As a particular example, to a directed graph $G$, one can associate the so-called {\em flag tournaplex} $\tFl(G)$, which is a semisimplicial complex whose simplices are precisely all subtournaments of $G$. In particular, it contains the directed flag complex $\dFl(G)$, whose simplices are the transitive tournaments of $G$, as a subcomplex. An interesting feature of tournaplexes is that they can be equipped with various filtrations arising from their structure, known as {\em directionality filtrations}.

In \cite{tournaplexes}, the techniques of this paper have been used to demonstrate that a certain bifiltration of tournaplexes can carry strictly more information than can be obtained by just combining the information arising from the two filtrations corresponding to it separately and not taking into account how they interact. The examples found have been analysed by determining the homotopy type of each bifiltration stage. These can be seen in Table 3.1 of \cite{tournaplexes}. Here, a brief explanation is given of how that table was obtained. First, compute the simplicial homology. Upon performing this computation, one notices that most of the complexes are simply connected, either by applying Theorem \ref{pi1free} when possible or by noticing that the $2$-skeleton agrees with the $2$-skeleton of the $n$-simplex for most other stages of the bifiltration. In either of these cases, one can then use Proposition \ref{moorewedge} to conclude. The complexes which do not fit under either of these cases are very small and can be treated separately in a variety of possible ways. For instance, it is possible to collapse all simplices of dimension $\geq 2$ in each of them, from which one can then easily determine the homotopy type. 
The actual computations used are available online \cite{github} in the file \texttt{Bifiltration Example.nb}.

\section{C. Elegans}
\label{sec:celegans}

Directed flag complexes can be used in neuroscience as a way to understand the global structure of graphs arising from brain data. In \cite{frontiers}, the network of chemical synapses in the brain of a C. Elegans nematode (as reconstructed by \cite{celegans}) has been analysed in this way, and the homology of the directed flag complex was computed (see Table \ref{fig:cebetti}).

\begin{table}[htb]
\centering
\begin{tabular}{|c||c|c|c|c|c|c|c|c|}
\hline
\# simplices&279&2194\footnotemark&4320&4902&4449&2709&901&155\\
\hline
\# Betti&1&183&249&134&105&63&19&5\\
\hline
\end{tabular}
\vskip.1in
\caption{Simplex counts and Betti numbers of the C. Elegans directed flag complex.}
\label{fig:cebetti}
\end{table}\footnotetext{In \cite{frontiers}, this is listed as $2199$, which appears to be a typo.}

This leaves open the question of what the actual homotopy type is and this is addressed next:

\begin{theorem}
\label{main}
The directed flag complex of the C. Elegans graph $G$ is homotopy equivalent to a wedge of spheres.
\end{theorem}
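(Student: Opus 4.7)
The plan is to apply the iterative \textsf{cone-and-collapse} strategy set up in Section \ref{sec:methods} to the complex $X = \dFl(G)$, using the homology data of Table \ref{fig:cebetti} as a guide for what spherical wedge summands to split off at each stage. First I would compute the integral homology and verify that the Betti numbers in Table \ref{fig:cebetti} are realised over $\Z$ without any torsion; this step is critical because only in the torsion-free case does Proposition \ref{moorewedge} allow us to hope for a wedge-of-spheres conclusion. Since the reduced homology of $X$ is then finitely generated and free in every degree, the target homotopy type is predetermined: it must be the wedge of $\beta_i$ copies of $S^i$ for each $i \geq 1$, and the task reduces to constructing an explicit sequence of homotopy equivalences witnessing this.

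Next, I would initialise $X_0 = X$ and iteratively produce $X_{j+1}$ from $X_j$ by (i) collapsing $X_j$ to a smaller complex $X_j'$ using any available free faces, (ii) locating a subcomplex $A \subseteq X_j'$ whose homotopy type is a wedge of spheres, (iii) verifying this by removing a suitable set of maximal cells of $X_j'$ lying in $A$ and checking that the remainder of $A$ collapses to a point, and (iv) applying Lemma \ref{genwedge} by coning off $A$ inside $X_j'$ to obtain $X_{j+1} = X_j' \cup CA$, recording along the way the spheres that have been split off. The goal is to drive $X_j$ down to a single point; once this is achieved, iterating the wedge decomposition of Lemma \ref{genwedge} backwards shows that $X$ is homotopy equivalent to the wedge of all spheres split off during the procedure, which by construction of the Betti numbers must match the target wedge.

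Finding suitable $A$'s is the main obstacle, and it is where the heuristics listed in Section \ref{sec:methods} become essential. Naively, one can take $A$ to be the union of top-dimensional simplices of $X_j'$, or a connected component thereof; when this is too large or badly behaved, one instead looks for small cycle supports in the homology of $X_j'$ whose underlying subcomplex is already a sphere or a small wedge of spheres. The danger, as emphasised in Section \ref{sec:methods} and in \cite{lofano-newman}, is that a poorly chosen collapse or a poorly chosen $A$ can leave the complex stuck in a non-collapsible configuration, forcing backtracking; the full details of the ordering conventions and the tie-breaking choices that make the run succeed are deferred to Appendix \ref{sec:algorithms}.

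Finally, I would implement the above in \textsc{Mathematica} and run it on the C. Elegans complex, logging each collapse and coning operation to a file so that the resulting homotopy equivalence is fully auditable from the recorded sequence of elementary moves, as indicated at the end of the introduction. Since each individual move (free-face collapse, application of Lemma \ref{easywedge} to $A$, and the cone-off of $A$ in $X_j'$) is a homotopy equivalence supported by the general lemmas already established, success of the run constitutes a proof: the total wedge of spheres split off equals the predicted one by the Betti-number count, and termination at a point certifies that no further summands remain. The theorem then follows by concatenating the sequence of homotopy equivalences produced.
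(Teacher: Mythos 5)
Your proposal follows essentially the same route as the paper: compute integral homology to rule out torsion, then apply the iterative \textsf{cone-and-collapse} procedure based on Lemma \ref{genwedge} (with the heuristics and ordering conventions of Appendix \ref{sec:algorithms}) until the complex reduces to a point, logging the sequence of collapses and coning operations as an auditable certificate. This matches the paper's proof of Theorem \ref{main}.
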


\begin{proof}
This is demonstrated by a direct application of the procedure \textsf{cone-and-collapse}, implemented in \textsc{Mathematica}. The implementation is available online \cite{github}, where the procedure is implemented in the file \texttt{Main Functions.nb} and the computation based on it is done in the file \texttt{C. Elegans.nb}. A complete list of simplices in the directed flag complex\footnote{The underlying graph is also implicitly recorded in this information as the $1$-skeleton.} together with the complete sequence of collapses and coning operations is provided in the file \texttt{sequence.dat}\footnote{Note that the data in this file alone is sufficient to verify the result, as it constitutes a recipe to construct an explicit homotopy equivalence.}, which was created using the file \texttt{C. Elegans to File.nb}.

The details of the procedure \textsf{cone-and-collapse} are described in pseudocode in Appendix \ref{sec:algorithms}, with some further details regarding the specific implementation used given in Appendix \ref{sec:implementation} and some preprocessing steps that are needed to get the input into a form amenable to the procedure described in Appendix \ref{sec:preprocessing} (a specific ordering of the vertices and edges in the graph needs to be used to avoid the algorithm getting stuck).
\end{proof}

A completely analogous result holds for the undirected flag complex of the undirected graph obtained from $G$ by forgetting the orientations of edges (duplicate pairs of edges arising from reciprocal pairs are counted as single edges). This computation is also available online \cite{github}, \texttt{C. Elegans Undirected.nb}. The simplex counts and Betti numbers can be seen in Table \ref{fig:ucebetti}.

\begin{table}[htb]
\centering
\begin{tabular}{|c||c|c|c|c|c|c|c|c|}
\hline
\# simplices&279&1961&2858&1891&869&278&50&4\\
\hline
\# Betti&1&162&83&/&/&/&/&/\\
\hline
\end{tabular}
\vskip.1in
\caption{Simplex counts and Betti numbers of the undirected C. Elegans flag complex. Interestingly, the Betti numbers are much lower in this case and only go up to degree 2.}
\label{fig:ucebetti}
\end{table}

One could in principle also use these methods to analyse the flag tournaplex $\tFl(G)$ of the C. Elegans graph $G$. The complete analysis of the homotopy type has not been performed and will not be given here. However, integral homology has been computed and is again torsion-free. The Betti numbers and simplex counts can be seen in Table \ref{fig:tcebetti}. Integral homology has also been computed for the various stages of the corresponding {\em local directionality filtration} (see \cite[Definition 3.1 and Lemma 3.2]{tournaplexes}). It turns out that for specific filtration values these filtration stages do contain torsion:

\begin{theorem}
\label{torsion}
Let $X^d$ be the $d$-th filtration stage of the flag tournaplex of the C. Elegans graph $G$, with respect to the local directionality filtration. For $d\in[2,10)$, the integral homology of $X^d$ contains torsion in degree $1$:
\[
\torsion(H_1(X^d))\cong\Z_3.
\]
For $d\in[20,28)$, the integral homology of $X^d$ contains torsion in degree $2$:
\[
\torsion(H_2(X^d))\cong\Z_3.
\]
In particular, for these values, $X^d$ is not a wedge of spheres. 
\end{theorem}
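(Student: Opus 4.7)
The statement is essentially a computational assertion about a specific complex derived from a specific graph, so the plan is to verify it by direct computation of integral homology of each $X^d$, leveraging the explicit combinatorial definition of the local directionality filtration from \cite{tournaplexes}. First I would construct the flag tournaplex $\tFl(G)$ by enumerating, for each $n$, all subtournaments of $G$ on $n+1$ vertices; this is done by iterating over induced subgraphs of the underlying digraph and keeping those in which every pair of distinct vertices is connected by exactly one oriented edge. Each such tournament is recorded as a simplex of $\tFl(G)$, retaining its full edge-orientation data (so that isomorphic but differently oriented tournaments are counted as distinct simplices, as required for a semisimplicial structure).

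Next, for each simplex $\sigma$, I would compute its local directionality value $d(\sigma)$ according to \cite[Definition 3.1]{tournaplexes}, and set $X^d$ to be the subcomplex generated by all simplices $\sigma$ with $d(\sigma)\leq d$ (closed under taking faces, as guaranteed by \cite[Lemma 3.2]{tournaplexes}). Having obtained $X^d$ as a semisimplicial complex, I would assemble its integer chain complex, using the standard boundary $\partial(v_0,\ldots,v_n)=\sum_{i=0}^n (-1)^i(v_0,\ldots,\widehat{v_i},\ldots,v_n)$, and then compute Smith normal forms of the relevant boundary matrices to extract $H_1(X^d)$ for $d\in[2,10)$ and $H_2(X^d)$ for $d\in[20,28)$. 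The claimed $\Z_3$ factors should then simply appear as the invariant factor $3$ in the appropriate Smith normal form, and one only needs to check that it is the unique nontrivial torsion factor in each case and that it persists throughout the stated intervals (equivalently, that the simplices added between consecutive values of $d$ in each interval do not alter the relevant torsion subgroup).

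The main obstacle I anticipate is sheer size: unlike the directed flag complex, the flag tournaplex includes \emph{every} subtournament, and the number of tournaments on $k$ vertices grows super-exponentially, so the chain groups of $\tFl(G)$ can be much larger than those of $\dFl(G)$ reported in Table \ref{fig:cebetti}. Two mitigations are available. First, by filtering at small $d$ one restricts attention to tournaments with relatively balanced local out-degrees, which cuts down the simplex count dramatically in the lower-dimensional regimes where the $H_1$ torsion lives; for the $H_2$ torsion range $d\in[20,28)$, the filtration is larger but still far from the full complex. Second, the algebraic Morse-theoretic reduction of \cite{leon} already invoked elsewhere in the paper is directly applicable to the integer chain complex and typically shrinks the matrices by orders of magnitude before the Smith normal form stage. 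The only genuinely new (non-routine) content of the proof, beyond verifying that the computation runs, is checking the precise intervals of constancy in $d$; this reduces to confirming that for $d=2,3,\ldots,9$ the added simplices between consecutive filtration values attach via maps whose Smith normal forms leave the $\Z_3$ summand of $H_1$ untouched, and analogously in degree $2$ for $d=20,\ldots,27$, which is again a direct computation recorded in the accompanying \textsc{Mathematica} notebook referenced in \cite{github}.
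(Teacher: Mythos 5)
Your proposal is correct and is essentially the paper's own proof: the paper establishes Theorem \ref{torsion} by directly computing the integral homology of the filtration stages $X^d$ in \textsc{Mathematica} (via Smith normal forms of the boundary matrices of the flag tournaplex), exactly as you describe. The extra detail you give about constructing $\tFl(G)$, assigning local directionality values, and checking constancy of the torsion across each interval of $d$ is just an elaboration of the same computation, not a different method.
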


\begin{proof}
Compute integral homology, e.g. using \textsc{Mathematica}. The computation is available online \cite{github} in the file \texttt{C. Elegans Tournaplex.nb}.
\end{proof}

\begin{table}[htb]
\centering
\begin{tabular}{|c||c|c|c|c|c|c|c|c|}
\hline
\# simplices&279&2194&4836&7662&13110&20530&22504&11520\\
\hline
\# Betti&1&164&261&387&574&734&1924&2652\\
\hline
\end{tabular}
\vskip.1in
\caption{Simplex counts and Betti numbers of the C. Elegans flag tournaplex.}
\label{fig:tcebetti}
\end{table}

This torsion occurs in somewhat low stages of the filtration. For example, $X^d$ for $d\in[2,10)$ consists precisely of the graph $G$ with a triangle glued in for every directed $3$-cycle (i.e. regular $3$-tournament). The significance of this torsion is unclear. It would be interesting to find examples where the full directed flag complex or flag tournaplex has a homotopy type which is not that of a wedge of spheres, without filtering them in any way. For example, the following question still remains open:

\begin{question}
What are the homotopy types of the directed flag complexes of the BBP microcircuits analysed in \cite{frontiers}? In particular, can torsion be found in the integral homology of any of them? Integral homology has been computed for several small subgraphs (about 2000 vertices) consisting of particular types of neurons and thus far none of them contained torsion.
\end{question}

If some of these complexes turn out not to be wedges of spheres, there could be algebraic invariants showing this (a potentially promising idea in this direction is developed in \cite{anibal}). But for those that are, one possibility would be to try and extend the methods of the present paper. It should be noted, however, that the directed flag complex treated here was small enough (both in dimension as well as number of simplices) that the phenomenon of ``getting stuck while collapsing'' was still possible to bypass.

\begin{question}
Is there a more systematic way of dealing with the issue of getting stuck? For instance, can simplicial expansions be used as a way of ``backtracking''? Without a systematic way of collapsing, the issue seems to become prohibitive for larger complexes \cite{lofano-newman}, but is there at least something that works for a large subclass of relatively small complexes (e.g. at most $30000$ vertices and dimension at most $10$)?
\end{question}

\begin{remark}\label{rmk:othermethods}
Note that it might be beneficial to make more use of fundamental groups when recognising contractible complexes, although this is also known to be undecidable in general (see \cite{tancer}, Appendix A). In some early versions of the main computation, some additional methods have been used such as homotopy colimits to blow up the complex when other procedures get stuck, the Seifert-van Kampen theorem as a method of checking that the complex is simply connected, relative homology for some intermediate heuristic computations, and a greedy method of building the subcomplex $A$ to be coned off, as described in the third bullet point of the paragraph discussing various heuristics to construct $A$ in Section \ref{sec:methods} (if needed, some of these are available upon request from the author). It would be interesting to see whether such methods could be reincorporated into the procedure to make it more powerful.
\end{remark}

\begin{remark}
One final amusing property of the C. Elegans graph $G$ is that is has a nontrivial automorphism group. This can be seen as follows: first compute the bidegrees of the vertices. Then compute the sets of bidegrees of all in-neighbours and out-neighbours of the vertices. This information suffices to uniquely identify $275$ of the vertices. The remaining $4$ vertices come in two indistinguishable pairs. Therefore
\[
\operatorname{Aut}(G)=\Z_2\oplus\Z_2.
\]
The full computation is available online \cite{github}, \texttt{C. Elegans Automorphisms.nb}.
\end{remark}

\section*{Acknowledgements}

I would like to thank everyone from the Neurotopology Team in Aberdeen for bearing with me while I was working on this project. Special thanks to Ran Levi for support, encouragement, reading parts of the paper and providing useful comments, Jason P. Smith for suggesting an improvement which made my \textsf{maximal-faces} function in \textsc{Mathematica} a little bit faster, Henri Riihim\"aki for a thorough reading of everything and going through it with me section by section and Etienne Lasalle for sharing his note \cite{lasalle} with me. Thanks to the people at the BBP, in particular Kathryn Hess for some early positive feedback, and Martina Scolamiero and Gard Spreemann who did the original directed flag complex and homology computations for C. Elegans in \cite{frontiers}, which have benefited me in many ways. Thanks also to Leon Lampret for many useful discussions about homology and how to compute it over the years, Pedro Concei\c{c}\~ao who also read parts of the draft, and Mark Grant and Irakli Patchkoria for some illuminating discussions regarding Proposition \ref{moorewedge}.

\bibliographystyle{plain}
\bibliography{homotopy}

\appendix

\section{Description of Algorithms and Implementation}
\label{sec:algorithms}

In this section the two procedures used to obtain the main results are described in detail. The ordering of the various lists in the intermediate outputs of these procedures is especially important, as this can affect the order of the collapsing and coning operations and thus affect the outcome for the cases of interest. As such, special attention needs to be paid to it when implementing the procedures; for this reason, as well as to avoid cluttering the presentation, it is described separately in Subsection \ref{sec:ordering}. For example, using the specific ordering of the vertices of C. Elegans described in Section \ref{sec:preprocessing}, the procedure \textsf{cone-and-collapse} arrives at a complex consisting of a single vertex, but altering the ordering of these vertices, simplices in various intermediate complexes or operations used can easily cause the procedure to get stuck in the middle; depending on the case at hand, this could possibly be remedied by adding further subroutines (see Remark \ref{rmk:othermethods}), but this is outside the scope of the paper.

The main two procedures are \textsf{pop-everything} (arising from Lemma \ref{easywedge}) in pseudocode is given as Algorithm \ref{pop-everything} and \textsf{cone-and-collapse} (arising from Lemma \ref{genwedge}) as Algorithm \ref{cone-and-collapse}. These have various subroutines briefly explained below. It should be noted that \textsf{pop-everything} either returns \textbf{true}, in which case we can immediately conclude that the space represented by $S$ is homotopy equivalent to a wedge of spheres, or it returns \textbf{false}, we cannot draw any useful conclusions at all. On the other hand, \textsf{cone-and-collapse} outputs a complex $T$ such that the initial complex $S$ is homotopy equivalent to the wedge of $T$ and a number of spheres. In the favourable case, $T$ will consist of a single vertex, in which case we can immediately conclude that $S$ is a wedge of spheres. In case this does not happen, $T$ can still be considered to be a ``simplification'' of $S$, which can be analysed further by other methods.

The main subroutines of the main two procedures are: \textsf{seq-collapse}, described as Algorithm \ref{seq-collapse}, which performs a sequence of collapses chosen greedily for as long as there are free faces; there is a corresponding Boolean function \textsf{seq-collapsible}, described as Algorithm \ref{seq-collapsible}, which checks if \textsf{seq-collapse} returns a complex consisting of a single vertex; \textsf{select-cells}, described as Algorithm \ref{select-cells}, which is a greedy procedure to find the cells needed to apply Lemma \ref{easywedge}; \textsf{find-good-cycles}, described as Algorithm \ref{find-good-cycles}, which attempts to find cycles that can be coned off; and \textsf{find-good-components}, described as Algorithm \ref{find-good-components}, which attempts to find components of the subcomplex formed by the top-dimensional simplices that can be coned off.

Apart from these, there are some further subroutines: \textsf{nullspace}, described in Algorithm \ref{nullspace}, computes a basis for the nullspace of a matrix, chosen in a specific way based on the Hermite normal form to standardise the output; \textsf{HNF}, described in Algorithm \ref{HNF}, computes the Hermite normal form of a matrix; \textsf{unique-simplices}, described in Algorithm \ref{unique-simplices}, takes a list $C$ of lists $C_i$ of simplices and selects the simplices in each $C_i$ which appear only for that $i$ -- this is useful when coning off the cycles found using \textsf{find-good-cycles} sequentially, because if only popping the uniquely occurring simplices is allowed when testing the conditions of Lemma \ref{genwedge}, this ensures that after each coning operation the conditions allowing the next one remain satisfied; \textsf{pop}, described in Algorithm \ref{pop}, takes a complex and pops a list of simplices in it; \textsf{cone}, described in Algorithm \ref{cone}, takes a complex and cones off a list of subcomplexes in it.

As some of these are very simple, they are treated as black boxes and are thus only specified by what their input and output should be. It should also be noted that the actual implementation uses some additional functions which are not directly relevant to the theoretical description of the procedures.

The description in pseudocode assumes that an ordered simplicial complex (e.g. a directed flag complex) is represented as an ordered list $S$ of (ordered) simplices $s$ that span the complex. Such a list is not assumed to be necessarily closed downwards with respect to taking ordered sublists. In fact, some of these complexes are assumed to be given by a list of their maximal faces (to emphasise this, they are in some cases denoted by $M$). Ordered simplices are assumed to be given as ordered lists of vertices. 

\begin{algorithm}[htb]
\caption{\textsf{pop-everything}}
\label{pop-everything}
\renewcommand{\algorithmicrequire}{\textbf{Input:}}
\renewcommand{\algorithmicensure}{\textbf{Output:}}
\renewcommand{\algorithmicprocedure}{\textbf{Procedure:}}
\begin{algorithmic}[1]
\Require ordered list of simplices $S$
\Ensure \textbf{true} if simplices as in Lemma \ref{easywedge} have been found, \textbf{false} otherwise
\Procedure{}{}
\State set $M=\textsf{seq-collapse}(S)$
\State compute Betti vector $(\beta_0,\ldots,\beta_n)$ of $S$, where $n=\dim S$
\For{$i=1,\ldots,n$}
\If{$\beta_i\neq 0$}
\State set $C_i = \textsf{select-cells}(M,i)$
\If{the length of $C_i$ differs from $\beta_i$}
\State \Return{\textbf{false}}
\EndIf
\EndIf
\EndFor
\State set $C$ to be the list obtained by concatenating all the $C_i$
\State set $A = \textsf{pop}(M,C)$
\State \Return{$\textsf{seq-collapsible}(A)$}
\EndProcedure
\end{algorithmic}
\end{algorithm}

\begin{algorithm}[b]
\caption{\textsf{cone-and-collapse}}
\label{cone-and-collapse}
\renewcommand{\algorithmicrequire}{\textbf{Input:}}
\renewcommand{\algorithmicensure}{\textbf{Output:}}
\renewcommand{\algorithmicprocedure}{\textbf{Procedure:}}
\begin{algorithmic}[1]
\Require ordered list of simplices $S$
\Ensure ordered list $M$ representing the maximal faces of a complex such that the initial complex $S$ is homotopy equivalent to the wedge of $M$ and a number of spheres
\Procedure{}{}
\State set $M=\textsf{seq-collapse}(S)$
\While{\textbf{true}}
\State set $G = \textsf{find-good-cycles}(M)$
\If{$G$ is empty}
\State set $G = \textsf{find-good-components}(M)$
\If{$G$ is empty}
\State \Return $M$
\EndIf
\State set $M=\textsf{seq-collapse}(\textsf{cone}(M,G))$
\EndIf
\State set $M=\textsf{seq-collapse}(\textsf{cone}(M,G))$
\EndWhile
\EndProcedure
\end{algorithmic}
\end{algorithm}

\begin{algorithm}[htb]
\caption{\textsf{seq-collapse}}
\label{seq-collapse}
\renewcommand{\algorithmicrequire}{\textbf{Input:}}
\renewcommand{\algorithmicensure}{\textbf{Output:}}
\renewcommand{\algorithmicprocedure}{\textbf{Procedure:}}
\begin{algorithmic}[1]
\Require ordered list of simplices $S$
\Ensure ordered list $M$ of maximal simplices in the complex obtained after performing a sequence of collapses chosen greedily for as long as there are free faces available
\Procedure{}{}
\State set $M$ to be the list of maximal faces of $S$
\While{$M$ has free faces and more than one vertex}
\State set $t$ to be the \textsf{short-lex}-first free face of $M$ (see Subsection \ref{sec:ordering})
\State collapse $t$ in $M$ along the unique maximal face it is contained in
\State set $M$ to be the list of maximal faces in the complex thus obtained 
\EndWhile
\Return{$M$}
\EndProcedure
\end{algorithmic}
\end{algorithm}

\begin{algorithm}[htb]
\caption{\textsf{seq-collapsible}}
\label{seq-collapsible}
\renewcommand{\algorithmicrequire}{\textbf{Input:}}
\renewcommand{\algorithmicensure}{\textbf{Output:}}
\renewcommand{\algorithmicprocedure}{\textbf{Procedure:}}
\begin{algorithmic}[1]
\Require ordered list of simplices $S$
\Ensure \textbf{true} if $\textsf{seq-collapse}(S)$ consists of a single vertex, \textbf{false} otherwise
\end{algorithmic}
\end{algorithm}

\begin{algorithm}[htb]
\caption{\textsf{select-cells}}
\label{select-cells}
\renewcommand{\algorithmicrequire}{\textbf{Input:}}
\renewcommand{\algorithmicensure}{\textbf{Output:}}
\renewcommand{\algorithmicprocedure}{\textbf{Procedure:}}
\begin{algorithmic}[1]
\Require ordered list of simplices $M$ representing the maximal simplices of an ordered simplicial complex, nonnegative integer $d$
\Ensure list $C$ of $d$-simplices in $M$, popping each of which sequentially decreases $\beta_d$ by one and preserves the other Betti numbers
\Procedure{}{}
\State set $C$ to be the empty list
\For{$s$ in $M$}
\State $T = \textsf{pop}(M,C)$
\If{\textsf{pop}ping $s$ in $T$ decreases $\beta_d$ by $1$ and preserves $\beta_i$ for $i\neq d$}
\State append $s$ to $C$
\EndIf
\EndFor
\State \Return{$C$}
\EndProcedure
\end{algorithmic}
\end{algorithm}

\begin{algorithm}[htb]
\caption{\textsf{find-good-cycles}}
\label{find-good-cycles}
\renewcommand{\algorithmicrequire}{\textbf{Input:}}
\renewcommand{\algorithmicensure}{\textbf{Output:}}
\renewcommand{\algorithmicprocedure}{\textbf{Procedure:}}
\begin{algorithmic}[1]
\Require ordered list of simplices $M$ representing the maximal simplices of an ordered simplicial complex
\Ensure ordered list $G$ of sublists of $M$ representing cycles in top dimension to be coned off
\Procedure{}{}
\State set $G$ to be the empty list
\State set $T$ to be the list of all top-dimensional simplices in $M$
\State set $\partial$ to be the simplicial boundary matrix in degree $d=\dim(M)$
\State set $B=\textsf{nullspace}(\partial)$
\State set $C$ to be the list of supports of elements of $B$, i.e. an element $c$ of $C$ is the list of all $d$-simplices that have a nonzero coefficient in the corresponding element $b$ of $B$
\State set $I=\textsf{unique-simplices}(C)$
\For{$c$ in $C$}
\For{$t$ in $I(c)$}
\If{$\textsf{seq-collapsible}(\textsf{pop}(c,t))$}
\State append $c$ to $G$
\State \textbf{break}
\EndIf
\EndFor
\EndFor
\State \Return{$G$}
\EndProcedure
\end{algorithmic}
\end{algorithm}

\begin{algorithm}[htb]
\caption{\textsf{find-good-components}}
\label{find-good-components}
\renewcommand{\algorithmicrequire}{\textbf{Input:}}
\renewcommand{\algorithmicensure}{\textbf{Output:}}
\renewcommand{\algorithmicprocedure}{\textbf{Procedure:}}
\begin{algorithmic}[1]
\Require ordered list of simplices $M$ representing the maximal simplices of an ordered simplicial complex
\Ensure ordered list $G$ of sublists of $M$ representing the components of the subcomplex spanned by the top-dimensional simplices to be coned off
\Procedure{}{}
\State set $G$ to be the empty list
\State set $T$ to be the list of all top-dimensional simplices of $M$
\State set $C$ to be the ordered list whose elements form a partition of $T$ into sublists representing the connected components of $T$
\For{$c$ in $C$}
\If{$\textsf{pop-everything}(c)$}
\State append $c$ to $G$
\EndIf
\EndFor
\State \Return{$G$}
\EndProcedure
\end{algorithmic}
\end{algorithm}

\begin{algorithm}[htb]
\caption{\textsf{nullspace}}
\label{nullspace}
\renewcommand{\algorithmicrequire}{\textbf{Input:}}
\renewcommand{\algorithmicensure}{\textbf{Output:}}
\renewcommand{\algorithmicprocedure}{\textbf{Procedure:}}
\begin{algorithmic}[1]
\Require $(m\times n)$ matrix $A=(a_{ij})$ with integer entries
\Ensure ordered list $B$ representing a basis of the null space of $A$
\Procedure{}{}
\State set $A'$ as the $(m\times n)$ matrix with $(i,j)$-entry given by $a_{i(n+1-j)}$
\State set $A''=\begin{bmatrix} A'\\ I_n \end{bmatrix}$ where $I_n$ is the $(n\times n)$ identity matrix
\State set $H=\textsf{HNF}(A''^T)$ (here $X^T$ denotes the matrix transpose of $X$)
\State write $H=\begin{bmatrix} C & D\\ O & B' \end{bmatrix}$, where $O$ is a $(k\times m)$ zero matrix with maximal $k$
\State set $B=(b_{ij})$ as the $(k\times n)$ matrix with $b_{ij}=b'_{i(n+1-j)}$, where $B'=(b'_{ij})$
\State \Return{$B$ as a list of $k$ rows of length $n$}
\EndProcedure
\end{algorithmic}
\end{algorithm}

\begin{algorithm}[htb]
\caption{\textsf{HNF}}
\label{HNF}
\renewcommand{\algorithmicrequire}{\textbf{Input:}}
\renewcommand{\algorithmicensure}{\textbf{Output:}}
\renewcommand{\algorithmicprocedure}{\textbf{Procedure:}}
\begin{algorithmic}[1]
\Require matrix $A$ with integer entries
\Ensure matrix $H$, the Hermite normal form of $A$ (see Subsection \ref{sec:HNF})
\end{algorithmic}
\end{algorithm}

\begin{algorithm}[htb]
\caption{\textsf{unique-simplices}}
\label{unique-simplices}
\renewcommand{\algorithmicrequire}{\textbf{Input:}}
\renewcommand{\algorithmicensure}{\textbf{Output:}}
\renewcommand{\algorithmicprocedure}{\textbf{Procedure:}}
\begin{algorithmic}[1]
\Require ordered list $C$ whose elements $C_i$ ($i=1,\ldots,n$) are ordered lists of simplices
\Ensure ordered list $I$ whose elements $I_i$ ($i=1,\ldots,n$) are ordered lists of simplices and $I_i$ contains precisely the elements of $C_i$ which do not appear in $C_j$ for $j\neq i$
\end{algorithmic}
\end{algorithm}

\begin{algorithm}[htb]
\caption{\textsf{pop}}
\label{pop}
\renewcommand{\algorithmicrequire}{\textbf{Input:}}
\renewcommand{\algorithmicensure}{\textbf{Output:}}
\renewcommand{\algorithmicprocedure}{\textbf{Procedure:}}
\begin{algorithmic}[1]
\Require ordered list of simplices $S$ and a sublist $T$
\Ensure ordered list $M$ representing the maximal faces of the simplicial complex obtained by removing the elements of $T$ from $S$ and then adding the codimension $1$ faces of elements of $T$ back to $S$
\end{algorithmic}
\end{algorithm}

\begin{algorithm}[htb]
\caption{\textsf{cone}}
\label{cone}
\renewcommand{\algorithmicrequire}{\textbf{Input:}}
\renewcommand{\algorithmicensure}{\textbf{Output:}}
\renewcommand{\algorithmicprocedure}{\textbf{Procedure:}}
\begin{algorithmic}[1]
\Require ordered list of simplices $S$ and an ordered list $G$ of lists $G_i$ ($i=1,\ldots,n$) representing subcomplexes of the ordered simplicial complex represented by $S$
\Ensure ordered list $M$ representing the maximal faces of the ordered simplicial complex obtained by adding the simplices in the lists $G_i'$ to $S$, where each $G_i'$ is obtained from $G_i$ by choosing a new vertex $x_i$ and appending it to each of its elements (in other words $G'_i=G_i\ast\{x_i\}$, a simplicial join); we assume this choice is such that $m<x_1<\ldots<x_n$ where $m$ is the largest vertex of $S$
\end{algorithmic}
\end{algorithm}

\subsection{The Hermite normal form}
\label{sec:HNF}

As mentioned earlier, the outcome of various coning operations will depend critically on the choice of subcomplexes to use them on, as well as the ordering of these subcomplexes. A part of the procedure is based on coning off supports of certain cycles. For this reason, it is important to pay special attention to the choice of the cycle basis used. The specific choice of cycle basis used in the procedure is based on the Hermite normal form, which is an analogue of reduced row echelon form for matrices with integer entries. There are various conventions used in the literature as to what properties the Hermite normal form should have, but whichever convention is chosen, the resulting normal form will be unique. The following convention is used (see e.g. \cite[Definition 2.8]{adkins-weintraub} or \cite[Definition 14.8]{bremner}):

\begin{definition}
The matrix $H\in\M_{m\times n}(\Z)$ is said to be in {\em Hermite normal form} if there is a $r\in\{0,\ldots,m\}$ such that the first $r$ rows of $H$ are nonzero and the rest are zero and there are integers $1\leq n_1<n_2<\ldots<n_r\leq m$ such that:
\begin{itemize}
\item $h_{ij}=0$ for $j<n_i$,
\item $h_{in_i}\geq 1$,
\item $0\leq h_{kn_i}<h_{in_i}$ for $k<i$.
\end{itemize}
\end{definition}

In other words, the nonzero rows come before the zero rows, the first nonzero entry (pivot) in each row is positive and strictly to the right of the one in the previous row and the entries above the pivots are nonnegative and less than the pivot. Every matrix with integer entries has a unique Hermite normal form (see Theorems 2.9 and 2.13 in \cite{adkins-weintraub}):

\begin{theorem}
For any matrix $A\in\M_{m\times n}(\Z)$ there is a unique invertible matrix $U\in\GL_{m}(\Z)$ and a matrix $H\in\M_{m\times n}(\Z)$ in Hermite normal form such that $A=UH$. We call $H$ {\em the Hermite normal form of $A$}.
\end{theorem}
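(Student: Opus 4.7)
The plan is to prove the two assertions separately: existence by describing a constructive reduction algorithm based on unimodular row operations, and uniqueness by characterising the entries of $H$ as invariants determined by $A$ together with the ordering conventions built into the Hermite normal form.

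For existence, I would first record that the three basic row operations -- swapping two rows, negating a row, and adding an integer multiple of one row to another -- each correspond to left multiplication by a matrix in $\GL_m(\Z)$, so that any finite composition yields some $U^{-1}\in\GL_m(\Z)$. I would then describe a column-sweep procedure that processes the matrix from the top-left outward. At stage $i$, with pivots already placed in rows $1,\ldots,i-1$ at columns $n_1<\cdots<n_{i-1}$, choose $n_i$ to be the leftmost column $j>n_{i-1}$ containing a nonzero entry in rows $i,\ldots,m$; using the Euclidean algorithm realised through row additions on rows $i,\ldots,m$ (combined with swaps and, if needed, a negation), reduce column $n_i$ so that only row $i$ is nonzero in positions $\geq i$ and the pivot $h_{i,n_i}$ is positive; finally, add integer multiples of row $i$ to rows $1,\ldots,i-1$ to force the entries $h_{k,n_i}$ for $k<i$ into the range $[0,h_{i,n_i})$. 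Iterating until no nonzero entries remain below the pivots produces an $H$ in Hermite normal form and a corresponding $U\in\GL_m(\Z)$ with $A=UH$.

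For uniqueness, suppose $A=U_1H_1=U_2H_2$ with $H_1,H_2$ both in Hermite normal form. Then $H_2=VH_1$ for $V=U_2^{-1}U_1\in\GL_m(\Z)$, so $H_1$ and $H_2$ generate the same $\Z$-submodule $L\subseteq\Z^n$ by their rows. I would argue inductively on rows that each nonzero row of $H$ is uniquely determined by $L$ together with the normal-form conditions: $n_1$ is the smallest column in which some element of $L$ has a nonzero entry; $h_{1,n_1}$ is the positive generator of the ideal of $n_1$-th coordinates of such elements; and then, given $n_1$ and $h_{1,n_1}$, the entries of row $1$ in columns $>n_1$ are forced to be the unique vector that lies in $L$ and agrees with $(0,\ldots,0,h_{1,n_1},*,\ldots,*)$. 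Passing to the sublattice of elements of $L$ whose $n_1$-th coordinate is a multiple of $h_{1,n_1}$ and reducing modulo row $1$ of $H$ recovers the same problem for $L$ one step smaller, so induction completes the argument.

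The main obstacle will be the uniqueness claim, specifically that the entries above each pivot are determined. The positions $n_i$ and the pivot values $h_{i,n_i}$ are essentially determined by the rank profile of $A$ over $\Z$, but the entries $h_{k,n_i}$ with $k<i$ require invoking the bound $0\leq h_{k,n_i}<h_{i,n_i}$, which singles out the unique representative of a residue class modulo $h_{i,n_i}$. Some care is also needed around the trailing zero rows of $H$: the normal form $H$ is uniquely determined by $A$, but when $A$ does not have full row rank the matrix $U$ is only pinned down up to modifications supported on the zero-row block of $H$, so uniqueness is cleanest to state for $H$ alone -- this is the sense in which one speaks of \emph{the} Hermite normal form of $A$.
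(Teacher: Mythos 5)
The paper does not prove this theorem itself: it is imported verbatim from the reference \cite{adkins-weintraub} (Theorems 2.9 and 2.13 there), so there is no in-paper argument to compare against. Your proof is the standard one that such references give --- existence by a column-sweep using unimodular row operations and the Euclidean algorithm, followed by reduction of the entries above each pivot into $[0,h_{in_i})$; uniqueness by observing that $H_1$ and $H_2$ have the same row lattice $L$ and that the normal-form conditions determine the pivot columns, the pivot values, and, via the bound $0\leq h_{kn_i}<h_{in_i}$, the entries above the pivots --- and it is sound. The one imprecise phrase (``the unique vector that lies in $L$ and agrees with $(0,\ldots,0,h_{1,n_1},*,\ldots,*)$'', which is not literally unique without the later reduction conditions) is repaired by your own final paragraph, where you correctly identify that it is the residue bounds at the subsequent pivot columns that pin those entries down. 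Your closing caveat is also correct and in fact sharpens the statement as printed: when $\rank A<m$ the matrix $U$ is \emph{not} unique, since it may be altered on the columns that multiply the zero rows of $H$, so only $H$ is canonically determined by $A$; the phrase ``unique invertible matrix $U$'' in the theorem should be read with that qualification (or restricted to the full-row-rank case).
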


This makes the Hermite normal form useful for the purpose of standardising the choices of cycle bases (see Algorithms \ref{find-good-cycles}, \ref{nullspace} and \ref{HNF}).

\subsection{Ordering of the intermediate outputs}
\label{sec:ordering}

The outputs of the algorithms described in pseudocode are ordered in the \textsf{short-lex} order (see \cite[p. 56]{wordproc} or \cite[p. 14]{sipser}), whenever they are ordered lists of simplices (note that this is also the standard ordering of such lists used in \textsc{Mathematica}). In the \textsf{short-lex} order, $s_1<s_2$ whenever either $s_1$ is shorter than $s_2$ or they have the same length and $s_1$ precedes $s_2$ in lexicographic order.

To be specific, the following lists are all assumed to be ordered in \textsf{short-lex}: all directed flag complexes (in particular, the one arising from the C. Elegans graph; in this case a specific ordering of vertices is assumed, see Subsection \ref{sec:preprocessing}), all lists of maximal simplices, this includes the outputs of \textsf{pop}, \textsf{cone}, \textsf{select-cells}, \textsf{seq-collapse} (as well as the list of maximal faces at its beginning and the lists arising from its intermediate collapsing operations), \textsf{cone-and-collapse}, the elements of the lists given by \textsf{find-good-cycles} and \textsf{find-good-components}, as well as the lists of top-dimensional simplices and the lists $c$ that appear as elements of the two lists called $C$ in these two procedures.

The outputs of the functions \textsf{find-good-cycles} and \textsf{find-good-components} are given as lists of lists of simplices, so a brief explanation of how the lists on these lists are ordered among themselves is in order. In the case of \textsf{find-good-cycles}, the ordering arising from the \textsf{nullspace} algorithm is used (while removing some of the rows). In the case of \textsf{find-good-components}, each list in the list has a set of vertices disjoint from the others and the lists are ordered so that the corresponding sets of vertices, ordered from the smallest to largest element, are ordered in \textsf{short-lex}.

The function \textsf{unique-simplices} preserves the ordering in its input lists while removing duplicates. The boundary matrix in Algorithm \ref{find-good-cycles} is assumed to be computed with respect to the lexicographic (\textsf{short-lex}) ordering of the simplices.

\subsection{Some implementation details}
\label{sec:implementation}

The \textsc{Mathematica} functions written to compute the directed flag complex (as well as the corresponding function computing the flag tournaplex) from the directed graph are inspired by the \textsc{Flagser} algorithm \cite{flagser}. The functions used for computing the integral homology of such complexes are based on the algorithms described in \cite{topforcomp}, in particular, to compute torsion, the Smith normal form is used.

Some of the algorithms described in pseudocode are implemented slightly differently than described, however this does not affect their outputs in any way and leads to the same sequence of collapses and coning operations. For instance, in some cases heuristic homology computations have been added to the implementation which allows one to skip certain cycles which have no chance of being spherical. The function used to perform the collapses, \textsf{collapse}, also modifies the values of some other functions, notably \textsf{maximal-faces}, which would be more expensive to compute from scratch. Many things are stored in memory for an additional speed up. The function \textsf{find-good-components} does not use \textsf{pop-everything}, but rather reimplements a part of it, to enable recording the specific simplices which are being popped when verifying that a component may be coned off in the file \texttt{sequence.dat}.

The \textsf{nullspace} function is implemented as described in Algorithm \ref{nullspace}. Initially, however, \textsf{Mathematica}'s built-in \textsf{NullSpace} function was used instead. Based on a degree of experimentation, it appears that these two functions do exactly the same thing for matrices with integer coefficients, which is what led to Algorithm \ref{nullspace} in the first place. There are other differences which the reader is invited to inspect in the code itself \cite{github}.

\subsection{Preprocessing steps}
\label{sec:preprocessing}

The edges in the Excel file \texttt{NeuronConnect.xls} storing the C. Elegans data \cite{celegans} are represented as quadruples, where the first two entries represent the two neurons forming the edge, the third entry represents type of connection and the fourth entry represents the number of synapses (this information is discarded in the construction of the graph). Regarding the ``type of connection'' there are six types: \textsf{S}, \textsf{Sp}, \textsf{R}, \textsf{Rp}, \textsf{EJ} and \textsf{NMJ}. Here, \textsf{EJ} means ``electrical junction'' and \textsf{NMJ} means ``neuromuscular junction''. The remaining four types are the ones used in the construction of the graph, i.e. chemical synapses: \textsf{S} and \textsf{Sp} mean that the first listed neuron sends a chemical synapse to the second listed neuron; \textsf{R} and \textsf{Rp} mean that the first listed neuron receives a chemical synapse from the second listed neuron. Therefore, the graph can be constructed either by taking into account all entries of type \textsf{S} and \textsf{Sp}, or alternatively, all entries of type \textsf{R} and \textsf{Rp}. If doing the latter, one has to be careful if using a case-sensitive extraction mechanism, as two of the neurons are listed as ``\textsf{avfl}'' and ``\textsf{avfr}'' rather than ``\textsf{AVFL}'' and ``\textsf{AVFR}'' (in all the other cases, upper-case names are used), but whatever the choice, it will lead to the same graph $G$. For the actual computation, the \textsf{S} and \textsf{Sp} types were used.

Having extracted the C. Elegans graph from the data, some preprocessing steps are performed before running the procedure \textsf{cone-and-collapse}. First, to avoid getting stuck, it is important how the vertices in the graph $G$ are ordered: select all the entries in the data file with the type \textsf{S} or \textsf{Sp}, then concatenate the ordered pairs of neurons as they appear in these entries (so first neuron of first entry comes first, second neuron of first entry comes second, first neuron of second entry comes third, etc.) From the list obtained in this way, delete all duplicates, only keeping the first occurrence of each neuron. Then reverse all the edges in the resulting graph to obtain a new graph $G^{\mathrm{op}}$ (this operation does not affect the ordering of the vertices or the homotopy type of the directed flag complex). Finally, make sure that the simplices of the directed flag complex $X=\dFl(G^{\mathrm{op}})$ are ordered lexicographically with respect to the ordering of the vertices. In the computation, each vertex was assigned a number from $0$ to $278$ in the described order. The implementation of the directed flag complex then makes sure the simplices are ordered in \textsf{short-lex}.

It is unclear why the vertex ordering described above performs better than some other obvious choices, such as ordering the vertices alphabetically.

\end{document}